\def\A{\mathbb A}
\def\N{\mathbb N}
\def\Q{\mathbb Q}
\def\P{\mathbb P}
\def\O{\mathcal O}
\def\Z{\mathbb Z}
\def\L{\mathcal L}
\newtheorem{thm}{Theorem}[section]
\newtheorem{lema}{Lemma}[section]
\newtheorem{coro}{Corollary}[section]
\newtheorem{defi}{Definition}[section]
\newtheorem{rema}{Remark}[section]
\newtheorem{prop}{Proposition}[section]
\newtheorem{exm}{Example}[section]
\begin{document}

\title{ Some evidence for the existence of Ulrich bundles}
\author{ \c Stefan Deaconu}

\begin{abstract} The question of existence of Ulrich bundles on nonsingular projective varieties is posed here in weaker terms: either to find a K--theoretic solution,  or to find one in the derived category of the variety.  We observe  that if any motivic vector bundle is algebraic,  there is always a solution in the Grothendieck group.  Also,  by considering the derived problem,  it is noted a formal way of producing Ulrich sheaves on a surface.
\end{abstract}
\maketitle
\section{Introduction} 

In general, by $k$ it is meant an algebraically closed field. 

\begin{defi} An Ulrich bundle on a projective variety $(X, \O_X(1))$ of dimension $d$ over $k$ is a locally free sheaf $\mathcal{E}$ such that
\begin{align*}
H^i(X, \mathcal{E}\otimes\mathcal{O}_X(j))=0\text{ for all }-d\leq j\leq -1\text{ and all }i\in\N.
\end{align*}
\end{defi}

There are two others equivalent definitions for an Ulrich bundle \cite[Theorem 1.1]{beu}. The guiding problem is to find such bundles on any nonsingular projective variety over $k$ (this geometric form of the problem was first raised in \cite{ES}).  In fact,  the guiding problem is the fundamental problem of the extrinsic point of view: to pin down equations of varieties by resolving sheaves.  For curves, this was accomplished by Petri \cite[A.I]{mum2}, \cite[III.3]{acgh},  but in general less is known. Now, the idea is that in the presence of an Ulrich bundle, the Chow form can (almost) be determined linearly.  For instance: 

\begin{exm} {\em If $X$ is a hypersurface of degree $d$ in some projective space $\P_k^N$, and $f=0$ is its equation, then the existence of an Ulrich bundle of rank $r$ on $X$ allow us to write $f^r=det(l_{ij})$, where $(l_{ij})$ is a $d\times d$ matrix of linear forms on $\P_k^N$ (\cite[Proposition 1]{beu}, or \cite[Section 3]{ES} for more)}.
\end{exm}

We note that in the case of curves, one can use invariant polynomials in the Chow form of the curve in order to embed the moduli space of curves in a projective space \cite[A.II]{mum2}. Thus, if any nonsingular projective variety is to carry an Ulrich bundle,  one might hope that this will improve our understanding of moduli in larger dimensions.

The purpose of this note is to give an evidence to this question in the form of a weak (K--theoretic) solution (Corollary \ref{sol}).  More precisely,  the idea is this: one first notice that, over $k=\mathbb{C}$,  a topological solution to the problem can be produced easy enough,  so then the question is to find a more algebraic language allowing to use homotopy reasons; such a language was introduced by Morel and Voevodsky,  which was used to establish a notion of {\em motivic vector bundle} as presented in \cite{af}.  One can always pass from a vector bundle to a motivic one,  and under the assumption that this process produce all motivic bundles (only in the smooth context),  a K--theoretic Ulrich bundle can be found.  

In the final part,  the notion of an Ulrich sheaf is introduced in derived version,  and it is noticed that a derived Ulrich sheaf on a surface always gives an Ulrich bundle (Corollary \ref{manuf}).

\section{Preliminaries}

By $K_{0, \mathbb{Q}}$ we shall denote the functor $X\mapsto K_0(X)\otimes_{\mathbb{Z}}\mathbb{Q}$, associating to a noetherian scheme $X$ the Grothendieck group of the category of coherent sheaves on $X$, extended over the rationals. Also, $K^0_{\Q}$ will denote the functor given by $X\mapsto K^0(X)\otimes\Q$, where $K^0(X)$ is the Grothendieck group of the category of locally free sheaves on $X$ (see \cite{man}). Recall the following fundamental fact: 

\begin{thm}\label{syz}(Kleiman,   \cite[1.9]{man}) If $X$ is a regular noetherian scheme having an ample invertible sheaf on it, the tautological map
\begin{align*}
K^0(X)\rightarrow K_0(X)
\end{align*}
is an isomorphism of $K^0(X)$--modules.
\end{thm}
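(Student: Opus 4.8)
The plan is to build an explicit inverse to the tautological map $\tau\colon K^0(X)\to K_0(X)$ out of finite locally free resolutions, so the whole argument rests on one lemma: on a regular noetherian scheme $X$ carrying an ample invertible sheaf $\O_X(1)$, every coherent sheaf $\F$ fits into an exact sequence $0\to\E_n\to\cdots\to\E_0\to\F\to 0$ with each $\E_i$ locally free of finite rank. Granting this, one sets $\varphi\colon K_0(X)\to K^0(X)$, $[\F]\mapsto\sum_i(-1)^i[\E_i]$, and the theorem reduces to checking that $\varphi$ is a well-defined two-sided inverse of $\tau$ compatible with the module structures.

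First I would prove the resolution lemma. Ampleness gives, for any coherent $\F$, a surjection $\O_X(-m)^{\oplus r}\twoheadrightarrow\F$ for $m\gg 0$; iterating yields a possibly infinite locally free resolution $\cdots\to\E_1\to\E_0\to\F\to 0$, whose $i$-th syzygy $\mathcal{Z}_i=\ker(\E_{i-1}\to\E_{i-2})$ (with $\E_{-1}=\F$) is coherent. Regularity makes each local ring $\O_{X,x}$ of finite global dimension $\dim\O_{X,x}$, so $(\mathcal{Z}_i)_x$ is free once $i\geq\dim\O_{X,x}$; hence the non--locally--free locus of $\mathcal{Z}_i$ is a closed subset of $X$, these loci decrease with $i$, and their intersection is empty. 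The descending chain condition on closed subsets of the noetherian space $X$ then forces $\mathcal{Z}_N$ to be locally free for some $N$, which truncates to the desired finite resolution (if $\dim X<\infty$ one may simply take $N=\dim X$). This is the step I expect to be the main obstacle: pointwise finiteness of projective dimension is automatic from regularity, but producing a \emph{uniform} length — without assuming finite Krull dimension — is exactly where one must combine the ample sheaf (to get resolutions at all), openness of the locally free locus of a coherent sheaf, and noetherianness of the underlying space.

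With the lemma in hand the rest is formal. Independence of $\varphi$ from the chosen resolution, and additivity along a short exact sequence $0\to\F'\to\F\to\F''\to 0$, follow from the standard comparison of resolutions together with the horseshoe lemma, so $\varphi$ is a well-defined homomorphism. Since a locally free sheaf is its own length-zero resolution, $\varphi\circ\tau=\mathrm{id}$; conversely, splitting a finite locally free resolution $\E_\bullet$ of $\F$ into short exact sequences shows $[\F]=\sum_i(-1)^i[\E_i]$ already in $K_0(X)$, so $\tau\circ\varphi=\mathrm{id}$.

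Finally I would record the module structure. Here $K^0(X)$ is a commutative ring under $\otimes_{\O_X}$, and $K_0(X)$ is a module over it because a locally free sheaf of finite rank is flat, so tensoring by it preserves coherence and exactness. The tautological map $\tau$ is $K^0(X)$-linear by construction, and $\varphi$ is $K^0(X)$-linear because tensoring a finite locally free resolution of $\F$ by a locally free $\E$ — being flat — is again a finite locally free resolution, now of $\E\otimes\F$. Therefore $\tau$ is an isomorphism of $K^0(X)$-modules, as claimed.
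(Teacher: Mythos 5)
The paper does not prove this statement at all: it is recalled as a known fact with only the citation to Manin's lectures, so there is no internal argument to compare yours against. What you have written is the classical Borel--Serre/Kleiman proof that lies behind that citation, and it is correct. The two genuinely nontrivial points are exactly the ones you identify: (i) the existence of a \emph{finite} locally free resolution of any coherent sheaf, where your combination of Serre-type surjections $\O_X(-m)^{\oplus r}\twoheadrightarrow\F$ (finitely many generators by coherence and quasi-compactness), finiteness of $\operatorname{gl.dim}\O_{X,x}=\dim\O_{X,x}$ at each point, openness of the locally free locus, and the descending chain condition on the closed non-free loci (whose stable value must equal the empty total intersection) is the right way to get a uniform bound without assuming $\dim X<\infty$; and (ii) well-definedness of the inverse $\varphi$. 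Point (ii) is the step you dispatch most quickly, and it deserves a sentence more care than ``standard comparison of resolutions plus horseshoe'': the horseshoe lemma is legitimate here only because ampleness guarantees that $\mathbf{Coh}(X)$ has enough locally free objects, and independence of the chosen resolution additionally uses that the kernel of a surjection of locally free sheaves is again locally free (the sequence splits locally), which is what lets two finite resolutions be dominated by a common one. Neither of these is a gap --- both are true and standard --- but they are the hypotheses that make the formal homological algebra go through, and naming them would make the argument self-contained. The verification of the $K^0(X)$-module structure via flatness of locally free sheaves is fine as stated.
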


\begin{defi} For a scheme $X$, by a Jouanolou device it is meant an affine vector bundle torsor $p:\tilde{X}\rightarrow X$.
\end{defi}

In order to present the existence result concerning the notion of Jouanolou device in full generality, recall that a quasi-compact and quasi-separated scheme $X$ is called $divisorial$ if it has an ample family of invertible sheaves $\{\L_i\}_i$, i.e. the distinguished open subsets $D(f)=\{f\neq 0\}, f\in\text{H}^0(X, \L_i^{\otimes n})$, form a base for the Zariski topology (this class of varieties were introduced first in S.G.A 6). This said, we have:

\begin{lema}\label{jou} (Jouanolou-Thomason, \cite[4.4]{wei}) Any divisorial quasiseparated and quasicompact scheme admits a Jouanolou device.
\end{lema}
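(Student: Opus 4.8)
The plan is to obtain the device by pulling back the ``tautological'' Jouanolou device of a product of projective spaces along a suitable affine morphism out of $X$. For $\P^n_\Z=\P(V)$ one lets $W_n$ be the closed subscheme of the affine space of $(n+1)\times(n+1)$ matrices defined by $A^2=A$ and $\mathrm{tr}(A)=1$; sending an idempotent to its image yields $q_n\colon W_n\to\P^n_\Z$, whose fibre over a line $\ell$ is the affine space of retractions of $\ell\hookrightarrow V$, a torsor under $\mathrm{Hom}(V/\ell,\ell)$. Thus $q_n$ is a torsor under a vector bundle and $W_n$ is affine; likewise $W_{n_1}\times\cdots\times W_{n_s}\to\P^{n_1}_\Z\times\cdots\times\P^{n_s}_\Z$ is a vector bundle torsor with affine total space, a finite product of affine $\Z$-schemes being affine.

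The second ingredient is a pullback principle: if $f\colon X\to Y$ is an affine morphism and $q\colon\tilde Y\to Y$ is a Jouanolou device, then $X\times_Y\tilde Y\to X$ is a Jouanolou device of $X$ --- it is a vector bundle torsor (base change of $q$), and its structure map to the affine scheme $\tilde Y$ is a base change of $f$, hence affine, so that $X\times_Y\tilde Y$ is affine. Combined with the first paragraph, it is enough to produce an affine morphism from $X$ to a finite product of projective spaces over $\Z$; equivalently, invertible sheaves $\M_1,\dots,\M_s$ on $X$, each generated by finitely many global sections, in such a way that the intersection of the non-vanishing loci of any system of sections, one from each factor, is affine.

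This is where the divisorial hypothesis enters. If $X$ carries a single ample invertible sheaf $\L$ --- the case relevant to the rest of the paper, $\O_X(1)$ being ample on a projective variety --- then quasi-compactness provides finitely many affine opens $X_{s_1},\dots,X_{s_k}$ covering $X$ with $s_j\in H^0(X,\L^{\otimes n_j})$; replacing $s_j$ by $s_j^{N/n_j}$ for $N$ a common multiple of the $n_j$, all the $s_j$ lie in $H^0(X,\L^{\otimes N})$, they then generate $\L^{\otimes N}$, and they define an affine morphism $X\to\P^{k-1}_\Z$ --- finishing this case. For a genuine ample family $\{\L_1,\dots,\L_r\}$ (finite by quasi-compactness) with no single ample sheaf, one again picks finitely many affine $X_{s_j}$ covering $X$, now with $s_j\in H^0(X,\L_{i_j}^{\otimes m})$ for a common power $m$; since the complement of the zero locus of a section of an invertible sheaf on an affine scheme is again affine, all finite intersections of the $X_{s_j}$ are affine as well.

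The final step --- assembling these data into an honest affine morphism $X\to\P^{n_1}_\Z\times\cdots\times\P^{n_s}_\Z$, the force of which is that it must proceed from a family of invertible sheaves none of which need be ample (so that no single affine morphism $X\to\P^N_\Z$ is available) --- is the technical heart of the statement, being exactly Thomason's extension of the original, quasi-projective, Jouanolou construction; I expect it to be the main obstacle, after which the pullback principle concludes. Throughout, affine morphisms and vector bundle torsors are quasi-compact and quasi-separated, so that $\tilde X$ remains quasi-compact and quasi-separated and the standard affineness and gluing criteria apply.
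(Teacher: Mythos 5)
Your construction of the device for $\P^n$ and your pullback principle along affine morphisms are both correct, and in the only case the paper actually needs (a projective variety carrying the ample $\O_X(1)$) your argument is complete; it coincides with what the paper does, since the paper offers no proof of this lemma beyond the citation to Weibel, and its Example \ref{st} is exactly your rank-one idempotent matrices pulled back along a finite (hence affine) morphism.

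The gap is in the genuinely divisorial case, and it is not just a technical assembly step that you may defer: the strategy of mapping to a product of projective spaces breaks down there. To get even a morphism $X\to\P^{k_i-1}$ (let alone an affine one) from sections $s_{i,1},\dots,s_{i,k_i}\in H^0(X,\L_i^{\otimes m})$ of a single member of the ample family, those sections must have no common zero; but when no single $\L_i$ is ample, the affine opens $X_s$ attached to powers of one fixed $\L_i$ need not cover $X$ --- only the union over all $i$ does --- so the coordinate maps to the individual factors need not be defined anywhere dense, and no care in the last step will produce them. Thomason's argument avoids projective spaces entirely: choose finitely many $s_j\in H^0(X,M_j)$ with $M_j=\L_{i_j}^{\otimes m_j}$ and $\bigcup_j X_{s_j}=X$; the $s_j$ assemble into a surjection of vector bundles $\sigma:\bigoplus_j M_j^{-1}\to\O_X$, and one takes $\tilde X$ to be the scheme of splittings of $\sigma$, i.e.\ the closed subscheme $\{\sum_j s_jt_j=1\}$ of the total space of $\bigoplus_j M_j^{-1}$, where $t_j$ are the tautological sections. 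This is a torsor under the vector bundle $\ker\sigma$, and it is affine because the global functions $f_j=s_jt_j$ satisfy $\sum_j f_j=1$ while each $\tilde X_{f_j}$ is a principal open of the affine scheme $p^{-1}(X_{s_j})$, so the standard affineness criterion for quasi-compact quasi-separated schemes applies. In short: your paragraphs one and two are the right reductions for the quasi-projective case, but the heart of the general statement is the splitting-torsor construction, not an affine morphism to a product of projective spaces.
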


This trick was brought in Algebraic Geometry by Jouanolou from a construction in Algebraic Topology (as with many others ideas) in order to extend the $K$--spectrum from commutative rings to schemes (according to \cite{wei}). 

\begin{exm}\label{st} {\em For instance, the standard Jouanolou device for a projective variety over $k$ is constructed as follows. First, one observe that it suffices to construct it on a projective space (by pulling back through a finite morphism,  according to Lemma \ref{finite} below). For $X=\P_k^n$, take $\widetilde{X}$ to be the affine variety given by the complement of the incidence divisor in $\P^n_k\times(\P^n_k)^{\vee}$, together with the projection onto the first factor. This is affine indeed: it is the complement of the hypersurface given by the equation $x_0y_0+\ldots+x_ny_n=0$, which we see via the Segre embedding $\P^n_k\times\P^n_k\rightarrow\P^{n^2+2n}_k$ that this hypersurface is the preimage of a hyperplane \cite[I.Ex.2.14]{har}; moreover, each fiber is given by the hyperplanes not containing a given point, so it is isomorphic to $\A_k^n$. 

For another variant: take $\widetilde{\P_k^n}$ to be the variety given by all $(n+1)\times (n+1)$ matrices which are idempotent and have rank one. Using the characteristic polynomial, we see that this is affine, and seeing each matrix as a linear transformation, we fiber it over $\P_k^n$ by sending a matrix to its image (makes sense, since these matrices have rank $1$).  }
\end{exm}

\begin{thm}\label{desc} \cite[5.1.2]{af} Assume $\text{dim}(X)\leq 3$. If $p:\tilde{X}\rightarrow X$ is a Jouanolou device, then any locally free sheaf on $\tilde{X}$ admits a descent datum relative to $p$.
\end{thm}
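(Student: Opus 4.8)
The plan is to build the descent datum by hand as far as it will go, and to dispatch the remaining cocycle condition by $\A^1$--homotopy theory. The key structural observations are that $p$ is an affine bundle torsor and $\tilde X$ is affine, so that the fibre products $\tilde X_m:=\tilde X\times_X\cdots\times_X\tilde X$ ($m$ copies) are smooth affine $k$--schemes --- on which vector bundles are $\A^1$--invariant (Lindel's theorem, i.e. Bass--Quillen for smooth affine $k$--algebras) and affinely representable in the sense of Asok--Hoyois--Wendt --- while $p$ itself is an $\A^1$--weak equivalence. Since $\tilde X\to X$ has Zariski--local sections (torsors under vector bundles are locally trivial), giving $\E$ a descent datum relative to $p$ is the same as exhibiting $\E\cong p^*\E_0$ for a locally free sheaf $\E_0$ on $X$; so it suffices to prove that $p^*\colon\operatorname{Vect}(X)\to\operatorname{Vect}(\tilde X)$ is essentially surjective.

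First I would produce the isomorphism underlying the datum. Because $p$ is a torsor under a vector bundle $\mathcal V$ on $X$, each projection $\tilde X_m\to\tilde X_{m-1}$ is a torsor under a vector bundle over the affine scheme $\tilde X_{m-1}$, hence trivial; so $\tilde X_m$ is the total space of a vector bundle over $\tilde X_{m-1}$ with a diagonal section as zero section. Scaling in the fibres then exhibits the two projections $\mathrm{pr}_1,\mathrm{pr}_2\colon\tilde X_2\rightrightarrows\tilde X$ as $\A^1$--homotopic morphisms to $\tilde X$ (each is homotopic to the composite of the bundle projection with the zero section, that is, to the other). Pulling $\E$ back along this homotopy gives a vector bundle on $\A^1\times\tilde X_2$, which by Lindel's theorem applied to the smooth affine scheme $\tilde X_2$ is independent of the $\A^1$--coordinate; restricting at the two endpoints produces an isomorphism $\varphi\colon\mathrm{pr}_1^*\E\xrightarrow{\ \sim\ }\mathrm{pr}_2^*\E$, which I would normalise so that $\Delta^*\varphi=\operatorname{id}_\E$. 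This $\varphi$ is the candidate descent datum, and the only thing left to check is the cocycle identity on $\tilde X_3$.

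The cocycle condition is where the bound $\dim X\le 3$ is genuinely used, and it is the main obstacle. Concretely, one must show that after a further correction of $\varphi$ by an automorphism of $\mathrm{pr}_1^*\E$ that is trivial on the diagonal, the ``cocycle defect'' in $\operatorname{Aut}(\mathrm{pr}_1^*\E)$ over $\tilde X_3$ can be killed. Cleanly, this is the obstruction--theoretic problem of lifting the $\A^1$--homotopy class $X\to BGL_n$ that classifies $\E$ --- transported along the weak equivalence $p$, with $n=\operatorname{rk}\E$ --- to an honest rank--$n$ bundle on $X$: for then its pullback to the affine scheme $\tilde X$ has the same $\A^1$--class as $\E$ and hence, by affine representability, is isomorphic to $\E$. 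The obstructions to such a lift lie in Nisnevich cohomology groups $H^i_{Nis}(X,-)$ with coefficients in the homotopy sheaves of the homotopy fibre of $\mathbf{Vect}_n\to L_{\A^1}\mathbf{Vect}_n\simeq BGL_n$ --- the sheaf--theoretic measure of the failure of $\A^1$--invariance of $\operatorname{Vect}_n$ --- and since that fibre is highly connected (its lowest homotopy sheaves are governed by the well--understood low--degree $\A^1$--homotopy sheaves of $BGL_n$), the first potentially non--vanishing obstruction appears in cohomological degree exceeding $\dim X$; then $\dim X\le 3$, together with Grothendieck vanishing $\operatorname{cd}_{Nis}X\le\dim X$, forces it to vanish. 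The delicate point, on which I would spend the most care, is precisely to identify those homotopy sheaves and their connectivity well enough to see that degrees $\le 3$ obstruct nothing; granting that, $\varphi$ is corrected to a genuine descent datum and $\E$ descends along $p$.
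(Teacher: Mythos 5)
The paper does not prove this statement: it is quoted verbatim from Asok--Fasel \cite[5.1.2]{af} (whose content is the Asok--Hoyois--Wendt theorem that motivic vector bundles are algebraic on smooth varieties of dimension at most three), so there is no internal proof to compare yours against. Judged on its own terms, the first two-thirds of your outline are correct and standard: since $p$ has Zariski-local sections, ``admits a descent datum'' is indeed equivalent to ``is isomorphic to $p^*\E_0$''; the fibre products $\tilde X_m$ are affine bundle towers; the scaling homotopy shows $\mathrm{pr}_1$ and $\mathrm{pr}_2$ are elementarily $\A^1$-homotopic; and Lindel's theorem on the smooth affine scheme $\tilde X_2$ produces the isomorphism $\varphi\colon\mathrm{pr}_1^*\E\simeq\mathrm{pr}_2^*\E$. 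All of that is fine.

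The genuine gap is exactly where you locate the difficulty and then wave at it. Your claim that the homotopy fibre of $\mathbf{Vect}_n\rightarrow L_{\A^1}\mathbf{Vect}_n$ ``is highly connected,'' so that ``the first potentially non-vanishing obstruction appears in cohomological degree exceeding $\dim X$,'' cannot be correct as a general connectivity statement: combined with $\mathrm{cd}_{Nis}X\leq\dim X$ it would prove the theorem in every dimension, whereas the extension beyond dimension three is precisely the open problem \cite[5.1.1]{af} recorded in the remark following the theorem. The dimension bound does not come from a soft connectivity estimate; it comes from hard, rank-by-rank computations of the low-degree $\A^1$-homotopy sheaves of $BGL_n$ (Morel's $\pi_1^{\A^1}=\mathbb{G}_m$, the identification of $\pi_2^{\A^1}$ with Milnor--Witt/Milnor $K$-theory sheaves, and for threefolds the much harder $\pi_3^{\A^1}$, together with realizability of the resulting characteristic classes by algebraic bundles). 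Since that is the entire content of the theorem --- everything before it works in all dimensions --- the proposal as written does not constitute a proof; it reduces the statement to the statement.
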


\begin{rema} {\em It is hoped that Theorem \ref{desc} can be extended to any dimension \cite[5.1.1]{af}. Just to briefly mention why is this important, given a nonsingular projective variety $X$ over $k$, set $\mathcal{V}^r_{\text{alg}}(X)$ to be the collection of locally free sheaves on $X_{\text{Zar}}$, of rank $r$, and $\mathcal{V}^r_{\text{mot}}(X)$ to be that of motivic locally free sheaves of rank $r$ on $X$, which is the collection of $\A^1$--homotopy classes of maps from $X_{\text{an}}$ to the corresponding infinite grassmanian \cite[3.6.1]{af}. Also, let $\mathcal{V}^r_{\text{alg}}(\tilde{X})$ and $\mathcal{V}^r_{\text{mot}}(\tilde{X})$ be defined similarly for $\tilde{X}$, where $p:\tilde{X}\rightarrow X$ is a Jouanolou device for $X$ (see Lemma \ref{jou}). Consider the following diagram:
\[ 
\begin{tikzcd}
\mathcal{V}^r_{\text{alg}}(X) \arrow{r}{p_X} \arrow[swap]{d}{p^*} & \mathcal{V}^r_{\text{mot}}(X) \arrow{d}{p^*_{\text{mot}}} \\
\mathcal{V}^r_{\text{alg}}(\tilde{X}) \arrow{r}{p_{\tilde{X}}} & \mathcal{V}^r_{\text{mot}}(\tilde{X})
\end{tikzcd}
\]
Since $\tilde{X}$ is affine, the Morel--Schlichting theorem asserts that $\mathcal{V}^r_{\text{alg}}(\tilde{X})\simeq\mathcal{V}^r_{\text{mot}}(\tilde{X})$ functorially \cite[3.4.3]{af}. Also, $\mathcal{V}^r_{\text{mot}}(X)=\mathcal{V}^r_{\text{mot}}(\tilde{X})$ (by definition of the motivic vector bundles, since $\tilde{X}$ is an affine torsor over $X$).  

Therefore, the meaning of the positive answer to \cite[5.1.1]{af} would be that any motivic vector bundle on $X$ is algebraic.}
\end{rema}

The $K$--groups are homotopy invariant: \\

\begin{thm}\label{inv}(Grothendieck, \cite[1.2]{wei}) Let $X$ be a regular projective scheme over $k$ (not necessarily algebraically closed), and $p:\tilde{X}\rightarrow X$ a Jouanolou device. Then $p^*:K_0(X)\rightarrow K_0(\tilde{X})$ is an isomorphism
\end{thm}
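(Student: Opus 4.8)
We want to prove that $p^{*}\colon K_{0}(X)\to K_{0}(\tilde X)$ is an isomorphism, where $K_{0}$ is the Grothendieck group of coherent sheaves; thus the statement is a form of $\A^{1}$--homotopy invariance of $G$--theory, and the plan is to reduce to that. Since a Jouanolou device $p\colon\tilde X\to X$ is an affine vector bundle torsor it is in particular flat, so $p^{*}$ is defined on coherent sheaves. Unwinding the definition, $\tilde X$ is a torsor in the Zariski topology under the coherent sheaf $\mathcal{E}$ of sections of a vector bundle $E$ on $X$; over an affine open $U\subseteq X$ such torsors are classified by $H^{1}(U,\mathcal{E}|_{U})$, which vanishes by Serre's vanishing on affine schemes, so $\tilde X\times_{X}U$ is a trivial torsor, and after shrinking $U$ so that $E|_{U}$ is free we get $\tilde X\times_{X}U\cong\A^{n}_{U}$. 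Hence $p$ is, Zariski--locally on $X$, the projection $\A^{n}\times(\text{affine open})\to(\text{affine open})$.

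Next I would treat the local model. For a Noetherian ring $R$ one has $G_{0}(R[t])\cong G_{0}(R)$: the pullback $\pi^{*}\colon G_{0}(R)\to G_{0}(R[t])$ is split injective by evaluation at $t=0$, and it is surjective because, by the projective bundle formula, $G_{0}(\P^{1}_{R})$ is free over $G_{0}(R)$ on $[\O]$ and $[\O(-1)]$, while the localization sequence $G_{0}(R)\xrightarrow{\,i_{\infty*}\,}G_{0}(\P^{1}_{R})\to G_{0}(\A^{1}_{R})\to 0$ for the section at infinity has image the submodule generated by $[\O]-[\O(-1)]$ (coming from $0\to\O(-1)\to\O\to i_{\infty*}\O_{\infty}\to 0$), so the quotient is $G_{0}(R)\cdot[\O_{\A^{1}_{R}}]$. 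Iterating over $R[t_{1},\dots,t_{j}]$ gives $G_{0}(\A^{n}_{R})\cong G_{0}(R)$.

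It remains to globalize. Choosing a finite affine open cover $X=U_{1}\cup\dots\cup U_{m}$ trivializing the torsor, I would run a Noetherian induction: for $U$ one of the $U_{i}$ and $Z=X\setminus U$, compare the localization sequences of $(X,Z,U)$ and $(\tilde X,\tilde X_{Z},\tilde X_{U})$, using that $\tilde X_{Z}\to Z$ is again an affine bundle torsor (so the left vertical map is an isomorphism by induction), that $\tilde X_{U}\cong\A^{n}_{U}$ (so the right vertical map is an isomorphism by the local model), and that flat base change makes the squares commute. For a genuine vector bundle the zero section splits $p^{*}$, and this purely $G_{0}$--level bookkeeping suffices; but a nontrivial torsor has no section, and closing the induction — specifically, obtaining injectivity of $p^{*}$ and not merely surjectivity — genuinely requires the full homotopy invariance of the $G$--theory spectrum (Quillen), equivalently the higher $G$--groups that enter Mayer--Vietoris. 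That is the main obstacle, and I would organize the write-up so that it is quarantined into a single appeal to that theorem, everything else being elementary. As an alternative packaging one may invoke Kleiman's Theorem \ref{syz}: $X$ is regular and carries an ample invertible sheaf, and $\tilde X$ is affine and regular (smooth over the regular $X$), so $K_{0}$ may be replaced throughout by the Grothendieck group $K^{0}$ of locally free sheaves — at the cost of then quoting homotopy invariance of $K$--theory for regular schemes, which of course rests on the same input.
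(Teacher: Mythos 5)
Your proposal is essentially correct, but note that the paper does not actually prove this statement: it is quoted as a classical result of Grothendieck with a pointer to Weibel, and the only "proof content" the paper supplies is the Remark immediately following it, which observes that establishing $K_0(X)\simeq K_0(\tilde{X})$ genuinely requires the higher $K$-groups. Your sketch fills in the standard argument (it is Quillen's proof of the homotopy property for $G$-theory): Zariski-local triviality of the torsor via $H^1(U,\mathcal{E}|_U)=0$ on affines, the affine-line computation, and a Noetherian induction with localization sequences; and you correctly isolate the same point the paper's Remark makes, namely that surjectivity comes for free from the right-exact $G_0$-level sequences while injectivity forces you into the long exact sequence and hence into higher $G$-theory. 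One refinement you should make explicit when writing this up: once the induction is run with the long exact localization sequence, the local input you need is $G_n(U)\simeq G_n(\A^m_U)$ for \emph{all} $n$ (at minimum, surjectivity on $G_1$), so the pleasant $\P^1$-plus-localization computation of $G_0(R[t])$ does not actually feed the induction --- the real black box is Quillen's fundamental theorem at all levels, and the cleanest bookkeeping is to prove $G_n(X)\simeq G_n(\tilde X)$ for all $n$ simultaneously and read off $n=0$. Your closing observation that Kleiman's theorem (Theorem \ref{syz}) lets one pass between $K^0$ and $K_0$ on both $X$ and the affine regular $\tilde X$ is exactly how the paper uses the result later, so that packaging is consistent with the rest of the text.
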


\begin{rema} {\em It is also to be observed that Theorem \ref{inv} is actually using the higher $K$--groups. Indeed, in order to establish the isomorphism $K_0(X)\simeq K_0(\tilde{X})$ one has to extend the $\A^1$--homotopy invariance from commutative rings to varieties, and this is done via a Mayer--Vietoris argument, which in turn is not possible without the development of the full $K$--spectrum. As such, it is not clear that, whenever we have $[\mathcal{F}]$ in $K_0(\tilde{X})$, we may write $[\mathcal{F}]=p^*[\mathcal{E}]$ for some sheaf $\mathcal{E}$.}
\end{rema}

\begin{rema}\label{proj} {\em Recall that, given a noetherian integral affine scheme $X$ over $k$, a theorem of Serre says that the category of locally free sheaves on $X$ is equivalent to the category of finitely generated projective modules over $\Gamma (X, \mathcal{O}_X)$ (algebraic Swan's theorem). This well-known fact follows from \cite{har}.II.5.5 and Nakayama's lemma.}
\end{rema}

We end this section with two more definitions. 

\begin{defi}\label{pseudo} Let $X$ be a noetherian scheme and $\mathcal{F}, \mathcal{G}\in Coh(X)$. We say that $\mathcal{F}$ and $\mathcal{G}$ are pseudo--isomorphic if there exists a morphism of coherent sheaves $\varphi:\mathcal{F}\rightarrow\mathcal{G}$ such that $dim(supp(Ker(\varphi)))$ and $dim(supp(Coker(\varphi)))$ are strictly smaller than $dim(X)$. 
\end{defi}

\begin{defi}\label{surj} Given a covariant functor $F:\textbf{Sch}/k\rightarrow\textbf{C}$, a proper morphism of noetherian schemes $f:X\rightarrow Y$ will be called $F$--$surjective$ if the induced transfer map
\begin{align*}
Ff:F(X)\rightarrow F(Y)
\end{align*}
is surjective. 
\end{defi}

\section{ The results}

\subsection{The K--theoretic approach}

We begin with the following remark: \\

\begin{lema}\label{pseudo} Let $(X, \mathcal{O}_X(1))$ be a projective variety and $\mathcal{F}\in\textbf{Coh}(X)$ with $dim(supp(\mathcal{F}))=dim(X)$. Then there is some $n\in\N$ such that $\mathcal{O}_X(n)^{\oplus r}$ is pseudo--isomorphic to $\mathcal{F}$, where $r=rk(\mathcal{F})$ (see Definition \ref{pseudo}).
\end{lema}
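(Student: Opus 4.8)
The plan is to exhibit one morphism $\varphi\colon\mathcal F\to\mathcal O_X(n)^{\oplus r}$ that is an isomorphism at the generic point of $X$, and to read the conclusion off from the supports of its kernel and cokernel. Since $X$ is a variety it is irreducible; write $d=\dim X$ and let $\eta$ be its generic point, with function field $k(\eta)$. If such a $\varphi$ is found, then $Ker(\varphi)$ and $Coker(\varphi)$ are coherent sheaves whose stalks at $\eta$ vanish, so their supports are closed subsets of $X$ missing $\eta$, hence proper closed subsets of the irreducible $X$, hence of dimension $<d$; this is exactly the condition for $\mathcal F$ and $\mathcal O_X(n)^{\oplus r}$ to be pseudo-isomorphic. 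One must be careful that the morphism runs in this direction: for $n\ge 0$ there need not be any nonzero map $\mathcal O_X(n)\to\mathcal F$ at all (take $X=\P^1_k$ with $\mathcal F=\mathcal O(-1)$), so it is a twist of $\mathcal F^{\vee}$, not of $\mathcal F$, that one has to make globally generated.

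First I would record the data at $\eta$. The hypothesis $dim(supp(\mathcal F))=d$ and irreducibility of $X$ force $supp(\mathcal F)=X$, so $\mathcal F_\eta$ is a nonzero $k(\eta)$-vector space of dimension $r=rk(\mathcal F)\ge 1$. Put $\mathcal F^{\vee}=\mathcal{H}om_{\mathcal O_X}(\mathcal F,\mathcal O_X)$, a coherent sheaf; since $\mathcal F$ is finitely presented and $\mathcal O_{X,\eta}=k(\eta)$, its stalk at $\eta$ is $Hom_{k(\eta)}(\mathcal F_\eta,k(\eta))$, again of dimension $r$. Next, by Serre's theorem \cite[II.5.17]{har} there is $n\in\N$ for which $\mathcal F^{\vee}(n):=\mathcal F^{\vee}\otimes\mathcal O_X(n)$ is globally generated; fix such an $n$. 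Set $W=H^0(X,\mathcal F^{\vee}(n))$, which is canonically $Hom_{\mathcal O_X}(\mathcal F,\mathcal O_X(n))$ as $\mathcal O_X(n)$ is invertible, and $V=\mathcal F^{\vee}(n)_\eta$, a $k(\eta)$-space of dimension $r$; global generation says the image of the evaluation map $W\to V$ spans $V$ over $k(\eta)$.

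From this I would extract $w_1,\dots,w_r\in W$ whose images $\bar w_1,\dots,\bar w_r$ form a $k(\eta)$-basis of $V$, by a greedy choice: if $\bar w_1,\dots,\bar w_j$ have been picked $k(\eta)$-independent with $j<r$, their $k(\eta)$-span is a proper subspace of $V$ and so cannot contain the whole image of $W$, which produces $w_{j+1}$; after $r$ steps the images are a basis. (No assumption on $k$ is needed here.) The $w_i$ then assemble into $\varphi=(w_1,\dots,w_r)\colon\mathcal F\to\mathcal O_X(n)^{\oplus r}$. Trivializing $\mathcal O_X(n)_\eta\cong k(\eta)$, the stalk $\varphi_\eta\colon\mathcal F_\eta\to k(\eta)^r$ is the map $v\mapsto(\bar w_1(v),\dots,\bar w_r(v))$, which is an isomorphism precisely because $(\bar w_i)$ is a basis of $\mathcal F_\eta^{\vee}$. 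Hence $Ker(\varphi)_\eta=Coker(\varphi)_\eta=0$, and the support argument of the first paragraph then finishes the proof.

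I do not expect a genuine obstacle; the argument is soft. The two points that actually require attention are the direction of $\varphi$ — forced by the requirement $n\in\N$, which is why one passes to $\mathcal F^{\vee}$ rather than working with $\mathcal F$ directly — and the standard bookkeeping that localization commutes with $\mathcal{H}om$ for the coherent sheaf $\mathcal F$ (so that $\mathcal F^{\vee}_\eta=Hom_{k(\eta)}(\mathcal F_\eta,k(\eta))$ has dimension $r$) together with the fact that, on an irreducible variety, a coherent sheaf with vanishing stalk at $\eta$ has support of dimension $<\dim X$.
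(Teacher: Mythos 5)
Your argument is correct, but it is essentially the dual of the one in the paper, and the difference is worth spelling out. The paper applies Serre's theorem to $\mathcal{F}(n)$ itself: it chooses global sections generating $\mathcal{F}(n)$, uses a presentation of $\mathcal{F}$ by locally free sheaves together with semicontinuity of the rank to locate an open set $U$ on which $\mathcal{F}$ is free of rank $r$, and then selects $r$ of the generating sections to obtain a map $\mathcal{O}_X^{\oplus r}\to\mathcal{F}(n)$, i.e. $\mathcal{O}_X(-n)^{\oplus r}\to\mathcal{F}$, which fails to be an isomorphism only on a proper closed subset. You instead make $\mathcal{F}^{\vee}(n)$ globally generated and extract $r$ sections whose germs form a basis of the $r$--dimensional space $\mathcal{F}^{\vee}(n)_{\eta}$, producing a map $\mathcal{F}\to\mathcal{O}_X(n)^{\oplus r}$ that is an isomorphism at the generic point $\eta$. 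Both routes are sound; yours has the merit of working only at $\eta$ (you never need generic local freeness of $\mathcal{F}$, only that $\dim_{k(\eta)}\mathcal{F}_{\eta}=r$), and your observation that the statement as literally written fails for $\mathcal{F}=\mathcal{O}_{\P^1_k}(-1)$ is correct and pinpoints a genuine sign/direction imprecision in the paper. The one point you should make explicit: the definition of a pseudo--isomorphism is not symmetric, and the lemma is invoked in Proposition \ref{main} via a four--term sequence $0\to\mathcal{K}\to\mathcal{O}_Z(n)^{\oplus r}\to\mathcal{F}\to\mathcal{C}\to0$, i.e. with the arrow pointing \emph{into} $\mathcal{F}$, whereas your construction gives the arrow \emph{out of} $\mathcal{F}$. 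This is harmless for the $K$--theoretic application, since either four--term sequence gives $[\mathcal{F}]=r[\mathcal{O}_Z(n)]\pm([\mathcal{C}]-[\mathcal{K}])$ with the error terms supported in lower dimension, and the paper's own proof in fact produces a map out of $\mathcal{O}_X(-n)^{\oplus r}$ rather than $\mathcal{O}_X(n)^{\oplus r}$, so neither proof matches the literal wording; but a complete write--up should note that the direction you prove suffices for the intended use.
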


\begin{proof}  According to Serre's theorem, let $n>>0$ such that $\mathcal{F}(n)$ is generated by global sections $s_1,\ldots ,s_N\in\text{H}^0(X, \mathcal{F}(n))$ \cite[II.5.17]{har}, i.e. we have a surjection
\begin{align*}
(s_1,\ldots ,s_N):\mathcal{O}_X^{\oplus N}\rightarrow\mathcal{F}(n)\rightarrow 0
\end{align*}
But locally we can be more precise: there is a Zariski open subset $U\subset X$ such that $\mathcal{F}_{|U}$ is a free $\mathcal{O}_U$ - module. Though this fact is well-known, we need to dwelve in the details of its proof, because the very notion of rank of $\mathcal{F}$ is based on it. There is a nonempty open subset $V\subset X$ such that $\mathcal{F}$ is the cokernel of some $\psi:\mathcal{L}_1\rightarrow\mathcal{L}_0$, with $\mathcal{L}_0, \mathcal{L}_1$ locally free \cite[II.Ex.5.4]{har}. Then let $U$ be the open subset where $x\rightarrow\text{rk}(\psi_x)$ is maximal (this is indeed open, thanks to semicontinuity). But then, essentially by Nakayama's lemma, it follows that (by eventualy shrinking $U$ further) $\mathcal{F}_{|U}\simeq\text{Coker}(\psi)_{|U}$ is free \cite[II.Ex.5.8]{har}; moreover, one defines $r$ to be $\text{rk}(\mathcal{F}_{|U})$.

Therefore, after possibly relabeling the sections $s_i$, by combining the above two remarks, we conclude that 
\begin{align*}
\varphi=(s_1,\ldots ,s_r):\mathcal{O}_X^{\oplus r}\rightarrow\mathcal{F}(n)
\end{align*}
is a pseudo-isomorphism (because the locus where $\varphi$ fails to be an isomorphism is a proper closed subset, namely $X-U$, and $X$ is irreducible).
\end{proof}

\begin{prop}\label{main} Any finite surjective morphism between irreducible projective varieties over $k$, $\pi:X\to Y$, is $K_{0, \Q}$ - surjective (see Definition \ref{surj}) .
\end{prop}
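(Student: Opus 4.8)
The plan is to show that, with $d=\dim Y$, the transfer $\pi_\ast\colon K_{0,\Q}(X)\to K_{0,\Q}(Y)$ is surjective; for the finite morphism $\pi$ this transfer is simply $[\mathcal F]\mapsto[\pi_\ast\mathcal F]$, since the higher direct images vanish. I would run a double induction: an outer induction on $d$, together with a dévissage over $Y$ that reduces the problem to the classes $[\mathcal O_Z]$ with $Z\subseteq Y$ an integral closed subscheme. Since $K_0(Y)$ is generated by such classes (the usual dévissage; it can also be obtained by induction from Lemma \ref{pseudo}), it suffices to place every $[\mathcal O_Z]$ in the image of $\pi_\ast$.

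I would first handle the proper subvarieties. Fix an integral closed $Z\subsetneq Y$, so $\dim Z<d$. Because $\pi$ is finite, hence a closed map, and surjective, one has $\pi(\pi^{-1}(Z))=Z$; as $Z$ is irreducible and $\pi^{-1}(Z)$ has finitely many irreducible components with closed images, some component $W$ of $\pi^{-1}(Z)$, given its reduced structure, satisfies $\pi(W)=Z$. Then $\pi|_W\colon W\to Z$ is a finite surjective morphism of integral projective varieties with $\dim W=\dim Z<d$, so by the outer induction hypothesis $[\mathcal O_Z]=(\pi|_W)_\ast(\xi)$ for some $\xi\in K_{0,\Q}(W)$. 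Since $W\hookrightarrow X\xrightarrow{\pi}Y$ and $W\to Z\hookrightarrow Y$ agree, functoriality of proper pushforward gives $[\mathcal O_Z]=(i_Z)_\ast[\mathcal O_Z]=(i_Z)_\ast(\pi|_W)_\ast(\xi)=\pi_\ast((i_W)_\ast\xi)\in\operatorname{im}(\pi_\ast)$. Hence the subgroup $F_{d-1}K_{0,\Q}(Y)$ generated by sheaves whose support has dimension $<d$ — spanned by the $[\mathcal O_Z]$ with $Z\subsetneq Y$ — lies inside $\operatorname{im}(\pi_\ast)$.

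It remains to reach $[\mathcal O_Y]$. I would apply Lemma \ref{pseudo} to the coherent sheaf $\pi_\ast\mathcal O_X$ on $(Y,\mathcal O_Y(1))$: its support equals $\pi(X)=Y$, so $\dim\operatorname{supp}(\pi_\ast\mathcal O_X)=d$, and its generic rank $r$ — the length of the generic fibre of $\pi$ — is $\ge 1$ since $\pi$ is dominant. The lemma produces an integer $n$ and a pseudo-isomorphism between $\mathcal O_Y(n)^{\oplus r}$ and $\pi_\ast\mathcal O_X$; splitting it into two short exact sequences yields $r[\mathcal O_Y(n)]-[\pi_\ast\mathcal O_X]\in F_{d-1}K_0(Y)$. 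Also $[\mathcal O_Y(n)]-[\mathcal O_Y]\in F_{d-1}K_0(Y)$, because comparing $\mathcal O_Y$ and $\mathcal O_Y(n)$ through a nonzero section of $\mathcal O_Y(\pm n)$ has cokernel supported on an effective divisor. Therefore $r[\mathcal O_Y]\equiv\pi_\ast[\mathcal O_X]\pmod{F_{d-1}K_0(Y)}$, and after tensoring with $\Q$ we divide by $r$ to get $[\mathcal O_Y]=\tfrac1r\,\pi_\ast[\mathcal O_X]+(\text{an element of }F_{d-1}K_{0,\Q}(Y))$, which by the previous paragraph lies in $\operatorname{im}(\pi_\ast)$. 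Since all generators $[\mathcal O_Z]$ of $K_{0,\Q}(Y)$ are then in the image, $\pi_\ast$ is surjective; the base case $d=0$ is the same computation with $\mathcal O_Y(n)=\mathcal O_Y$ and no proper $Z$ to treat.

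The one step that genuinely needs an idea, and where I expect the main obstacle to sit, is this last one: $\pi_\ast\mathcal O_X$ is only \emph{pseudo-isomorphic}, never actually isomorphic in $K_0$, to a multiple $\mathcal O_Y(n)^{\oplus r}$ of a line bundle, so one must carry along an error term supported in smaller dimension and feed it back into the induction, and the multiplicity $r$ can be inverted only rationally — which is precisely why the statement is about $K_{0,\Q}$ rather than $K_0$. A secondary technical point is to check carefully that a component of $\pi^{-1}(Z)$ dominating $Z$ exists and that the resulting $\pi|_W$ is again finite and surjective between integral projective varieties, so that the outer induction applies cleanly. (Alternatively one could pass to rational Chow groups via the Riemann–Roch isomorphism $K_{0,\Q}(-)\cong CH_\ast(-)\otimes\Q$, under which the assertion reduces to the elementary identity $\pi_\ast[W]=\deg(W/Z)\,[Z]$ for a dominating component $W$; but the argument above stays inside $K$-theory and uses only Lemma \ref{pseudo}.)
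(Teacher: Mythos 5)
Your argument is correct and follows essentially the same route as the paper: induction on the dimension of the support, Lemma \ref{pseudo} applied to the pushforward of the structure sheaf of a component of the preimage dominating the given subvariety, and rational division by the generic rank. The only organizational difference is that you normalize the generators to the untwisted classes $[\mathcal{O}_Z]$, absorbing the twists $[\mathcal{O}_Z(n)]-[\mathcal{O}_Z]$ into the lower step of the filtration, whereas the paper carries the twists along and disposes of them via the projection formula and the compatible polarizations $\pi^*\mathcal{O}_Y(1)=\mathcal{O}_X(1)$.
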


\begin{proof} Fix polarizations $\mathcal{O}_X(1), \mathcal{O}_Y(1)$ such that $\pi^*\mathcal{O}_Y(1)=\mathcal{O}_X(1)$, and let $[\mathcal{F}]\in K_{0, \Q}(Y)$. We then show that there is $e\in K_{0, \Q}(X)$ such that $\pi_*(e)=[\mathcal{F}]$ by induction on $\text{dim}(Z)$, where $Z=\text{supp}(\mathcal{F})$. This will be enough, because any element of the $K_0$ - group may plainly be written as a difference of the classes of two coherent sheaves.

The assertion is clearly true if $\text{dim}(Z)=0$. Indeed, any such $\mathcal{F}$ is a direct sum of skyscreapers concentrated at some points, with fiber $k$. Given a closed point $y\in Y$, let $x\in X$ be with $\pi(x)=y$; then $\pi_*\mathcal{O}_x=\mathcal{O}_y$, so $[\mathcal{O}_y]=\pi_*(\frac{1}{d}[\mathcal{O}_x])$, where $d=\text{deg}(\pi)$. So, from now on, assume $\text{dim}(Z)>0$.

First note that we may assume $Z$ to be irreducible. If not, let $Z=\bigcup_iZ_i$ be its decomposition into irreducible components. Then, denoting by $\iota_i$ the inclusion $Z_i\rightarrow Z$, the natural morphism $\varphi:\mathcal{F}\rightarrow\bigoplus_i\iota_{i*}(\mathcal{F}_{|Z_i})$ fits into
\begin{align*}
0\rightarrow\text{Ker}(\varphi)\rightarrow\mathcal{F}\overset{\varphi}{\rightarrow}\bigoplus_i\iota_{i*}(\mathcal{F}_{|Z_i})\rightarrow\text{Coker}(\varphi)\rightarrow 0,
\end{align*}
where $\text{dim(supp(Ker}(\varphi)))$ and $\text{dim(supp(Coker}(\varphi)))$ are strictly smaller than $\text{dim}(Z)$. As such, the induction hypothesis allow us to write $[\text{Ker}(\varphi)]=\pi_*(e_1)$ and $[\text{Coker}(\varphi)]=\pi_*(e_2)$, for some $e_1, e_2\in K_{0, \Q}(X)$. Now split the previous sequence in two short exact sequences using $\text{Im}(\varphi)$. If $\varphi$ is surjective, then $[\mathcal{F}]\in\text{Im}(\pi_*)$ whenever $[\mathcal{F}_{|Z_i}]\in\text{Im}(\pi_*)$; if not, then $\text{Im}(\varphi)$ falls also under the inductive hypothesis, and we have again the conclusion. 

Next, since $Z$ is assumed to be irreducible, we may use Lemma \ref{pseudo} and get a pseudo--isomorphism
\begin{align*}
0\rightarrow\mathcal{K}\rightarrow\mathcal{O}_Z(n)^{\oplus r}\rightarrow\mathcal{F}\rightarrow\mathcal{C}\rightarrow 0,
\end{align*}
were $r=\text{rank}(\mathcal{F})$ and $\text{dim(supp}(\mathcal{K}))$, $\text{dim(supp}(\mathcal{C}))$ are strictly smaller than $\text{dim}(Z)$, so that $[\mathcal{K}], [\mathcal{C}]\in\text{Im}(\pi_*)$. Therefore, as $[\mathcal{O}_Z(n)^{\oplus r}]=r[\mathcal{O}_Z(n)]$, it remains to see that $[\mathcal{O}_Z(n)]\in\text{Im}(\pi_*)$, for all $n$. 

Fix $X'$ an irreducible component of $\pi^{-1}(Z)$. Then the morphism $\pi':X'\rightarrow Z, \pi'=\pi_{|X'}$ is again finite and surjective. Apply Lemma \ref{pseudo} to $\pi_*'\mathcal{O}_{X'}$ and get an exact sequence
\begin{align*}
0\rightarrow\mathcal{K}'\rightarrow\mathcal{O}_Z(n')^{\oplus r'}\rightarrow\pi_*'\mathcal{O}_{X'}\rightarrow\mathcal{C}'\rightarrow 0,
\end{align*}
for some $n', r'\in\N$, with $\text{dim(supp}(\mathcal{K}'))$ and $\text{dim(supp}(\mathcal{C}'))$ strictly smaller than $\text{dim}(Z)$. Twisting this sequence by $\mathcal{O}_Z(n-n')$ and using the projection formula \cite[II.ex.5.1.(d)]{har} we get
\begin{align*}
0\rightarrow\mathcal{K}'(n-n')\rightarrow\mathcal{O}_Z(n)^{\oplus r'}\rightarrow\pi_*'(\mathcal{O}_{X'}(n-n'))\rightarrow\mathcal{C}'(n-n')\rightarrow 0.
\end{align*}
Note that at this point our choice of polarizations matter. Once more, by the inductive hypothesis, $[\mathcal{K}'(n-n')], [\mathcal{C}'(n-n')]\in\text{Im}(\pi_*)$, and since $\pi_*'(\mathcal{O}_{X'}(n-n'))=\pi_*(\O_{X'}(n-n'))\in\text{Im}(\pi_*)$, we conclude that $[\mathcal{O}_Z(n)]=\frac{1}{r'}[\mathcal{O}_Z(n)^{\oplus r'}]\in\text{Im}(\pi_*)$. 
\end{proof}

\begin{rema} {\em We note that the syntethic analogue of Proposition \ref{main} follows immediately. Indeed any finite surjective morphism between irreducible projective varieties $\pi:X\to Y$ is inducing a surjection $\pi_*:A(X)_{\mathbb{Q}}\to A(Y)_{\mathbb{Q}}$ at the level of Chow groups with rational coefficients, because $\pi_*([X])=d[Y]$ (where $d=deg(\pi)$), so for any $y\in A(Y)$ the projection formula gives $y=[Y]_{\cdot} y=\pi_*(\frac{1}{d}[X]_{\cdot}\pi^*y)\in A(X)_{\mathbb{Q}}$. If we assume $X$ and $Y$ to be nonsingular, using the Chern character isomorphism $K(X)_{\mathbb{Q}}\to A(X)_{\mathbb{Q}}$ \cite[9.1, 11.6]{man}, we may try to lift the surjectivity just proved to that at the $K$--theoretic level, obtaining thus another proof of Theorem \ref{main}, but the corresponding diagram is not commutative: it is the content of Grothendieck--Riemann--Roch \cite[19.5]{man}. }
\end{rema}

\begin{coro}\label{cmain} Any finite surjective morphism between nonsingular irreducible projective varieties over $k$ is $K^0_{\Q}$ - surjective.
\end{coro}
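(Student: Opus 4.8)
The plan is to deduce the corollary from Proposition \ref{main} by comparing $K^0_\Q$ with $K_{0,\Q}$. First I would recall that a nonsingular projective variety over $k$ is regular and carries an ample invertible sheaf (its polarization $\O(1)$), so Kleiman's theorem (Theorem \ref{syz}) applies both to $X$ and to $Y$: the tautological maps $K^0(X)\to K_0(X)$ and $K^0(Y)\to K_0(Y)$ are isomorphisms. Tensoring an isomorphism of abelian groups with $\Q$ yields an isomorphism, so we get $K^0_\Q(X)\xrightarrow{\sim}K_{0,\Q}(X)$ and $K^0_\Q(Y)\xrightarrow{\sim}K_{0,\Q}(Y)$.

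Next I would check that under these identifications the transfer on $K^0_\Q$ is carried to the one on $K_{0,\Q}$. The point is that $\pi$, being a finite (hence quasi-finite) surjective morphism between regular — in particular Cohen--Macaulay — schemes with zero-dimensional fibres, is flat by miracle flatness; a finite flat morphism has locally free pushforward of finite rank, so $\pi_*$ already carries vector bundles to vector bundles and the transfer $\pi_*\colon K^0(X)\to K^0(Y)$ is defined directly on the category of locally free sheaves. Since $\pi$ is affine, $R^i\pi_*=0$ for $i>0$, so on the class of a locally free sheaf $\mathcal{E}$ on $X$ the two transfers agree (both give $[\pi_*\mathcal{E}]$), whence the square
\[
\begin{tikzcd}
K^0_\Q(X) \arrow{r}{\sim} \arrow[swap]{d}{\pi_*} & K_{0,\Q}(X) \arrow{d}{\pi_*} \\
K^0_\Q(Y) \arrow{r}{\sim} & K_{0,\Q}(Y)
\end{tikzcd}
\]
commutes. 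Proposition \ref{main} says the right-hand vertical map is surjective; as the horizontal maps are isomorphisms, the left-hand vertical map $\pi_*\colon K^0_\Q(X)\to K^0_\Q(Y)$ is surjective, which is precisely the assertion.

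The only genuinely delicate point is the middle step: that the transfer on $K^0$ is well defined and compatible with the one on $K_0$, which rests on the flatness of $\pi$. If one prefers not to invoke miracle flatness, one may instead simply \emph{define} the covariant structure on $K^0_\Q$ by transporting $\pi_*$ along the Kleiman isomorphisms of the first paragraph; then commutativity of the square is tautological, and the corollary follows formally from Theorem \ref{syz} and Proposition \ref{main} with nothing further to prove.
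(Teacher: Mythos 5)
Your proof is correct and follows the same route as the paper, which simply cites Theorem \ref{syz} and Proposition \ref{main} in one line; you have usefully filled in the detail the paper leaves implicit, namely that the transfer on $K^0_{\Q}$ is compatible with (or can be defined by transport along the Kleiman isomorphisms from) the transfer on $K_{0,\Q}$.
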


\begin{proof} This is given by Theorem \ref{syz} and Proposition \ref{main}.
\end{proof}

Also recall the following:

\begin{lema}\label{finite}(E. Noether, \cite[16]{mum}) If $(X, \O_X(1))$ is a projective variety of dimension $d$ over $k$, then there is a finite flat morphism $\pi: X\rightarrow\P_k^d$ such that $\pi^*\mathcal{O}_{\P_k^d}(1)=\mathcal{O}_X(1)$. 
\end{lema}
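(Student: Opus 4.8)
The plan is to realize $X$ inside a projective space via its polarization and then push it down to $\P_k^d$ by a chain of linear projections, each of which is visibly finite and preserves $\O(1)$; surjectivity will come for free from a dimension count, and the only genuinely non-formal point will be flatness.

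First I would use that $\O_X(1)$ is very ample to fix a closed embedding $X\hookrightarrow\P_k^N$ with $\O_X(1)=\O_{\P_k^N}(1)|_X$. If $N=d$ there is nothing to prove, since an integral closed subscheme of $\P_k^d$ of dimension $d$ is all of $\P_k^d$. If $N>d$, then $X\subsetneq\P_k^N$, so I can pick a closed point $p\notin X$ and form the linear projection $\pi_p\colon\P_k^N\setminus\{p\}\to\P_k^{N-1}$; since $p\notin X$ it restricts to a morphism $\bar{\pi}_p\colon X\to\P_k^{N-1}$. This $\bar{\pi}_p$ is proper, because $X$ is proper over $k$ and $\P_k^{N-1}$ is separated, and it is quasi-finite, because its fibre over a point $q$ is $X\cap\ell_q$ for a suitable line $\ell_q\subset\P_k^N$ through $p$, which is proper in $\ell_q$ (as $p\in\ell_q\setminus X$) and hence $0$-dimensional; a proper quasi-finite morphism is finite. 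Moreover a linear projection carries hyperplanes to hyperplanes, so $\pi_p^*\O_{\P_k^{N-1}}(1)\cong\O_{\P_k^N}(1)$ away from $p$, and restricting to $X$ returns $\O_X(1)$.

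Next I would iterate. The image $X_1:=\bar{\pi}_p(X)\subset\P_k^{N-1}$ is again an integral projective variety of dimension $d$, since a finite morphism is closed and preserves dimension. As long as the ambient projective space has dimension larger than $d$, the image of $X$ in it is a proper subvariety, so there is a point to project from; each such projection, restricted to that image, is finite by the argument above and still pulls $\O(1)$ back to $\O(1)$, and a composite of finite morphisms is finite. Hence after $N-d$ steps I obtain a finite morphism $\pi\colon X\to\P_k^d$ with $\pi^*\O_{\P_k^d}(1)=\O_X(1)$; its image is closed, irreducible and of dimension $d$, so it equals $\P_k^d$, and $\pi$ is surjective. (Equivalently, one can argue algebraically in Noether's original spirit: since $k$ is infinite, choose a homogeneous system of parameters $\theta_0,\dots,\theta_d$ of degree $1$ in the homogeneous coordinate ring $S$ of $X\subset\P_k^N$ and take $\text{Proj}$ of the resulting module-finite inclusion $k[\theta_0,\dots,\theta_d]\hookrightarrow S$.)

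The remaining point, which I expect to be the main obstacle, is flatness: it is the one ingredient the projection construction does not hand over. A finite morphism is flat exactly when $\pi_*\O_X$ is locally free, and by the local criterion of flatness in its ``miracle flatness'' form this holds once $X$ is Cohen--Macaulay, because $\P_k^d$ is regular and every fibre of $\pi$ is $0$-dimensional. For $d=1$ it is automatic, a torsion-free coherent sheaf on a regular curve being locally free, and more generally it holds whenever $X$ is normal of dimension at most $2$; so the content beyond the elementary geometry of projections is precisely this Cohen--Macaulay input, and it is on this point that care is needed for general $X$.
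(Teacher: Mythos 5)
The paper supplies no proof of this lemma at all --- it is quoted from Mumford --- so there is nothing internal to compare against; your argument is the standard geometric form of Noether normalization (repeated linear projection from points off the variety), which is exactly what the cited source does, and the finiteness of each projection, the preservation of $\mathcal{O}(1)$, and the surjectivity by dimension count are all handled correctly. Your closing observation is the genuinely substantive one, and you are right not to wave it past: finiteness and surjectivity do not by themselves give flatness. By miracle flatness (regular target, zero-dimensional fibres) the map is flat as soon as $X$ is Cohen--Macaulay, and conversely, if $X$ fails to be Cohen--Macaulay at some point then $\pi_*\mathcal{O}_X$ has depth $<d$ at the image point and cannot be locally free over the regular local ring of $\P_k^d$ there, so \emph{no} finite surjection $X\to\P_k^d$ is flat. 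Hence the lemma as literally stated, for an arbitrary projective variety, is false without a Cohen--Macaulay hypothesis. This does not affect the paper, since the only application (Corollary \ref{sol}) concerns a nonsingular $X$, which is Cohen--Macaulay; but strictly speaking the hypothesis should appear in the statement, and your proof, with that hypothesis added, is complete.
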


\begin{coro}\label{sol} Let $(X, \mathcal{O}_X(1))$ be a nonsingular irreducible projective variety over $k$. If $d=dim(X)\leq 3$, then there is some locally free sheaf $\mathcal{E}$ on $X$ and some $N\in\mathbb{N}$ such that $\pi_*([\mathcal{E}])=[\mathcal{O}_{\mathbb{P}_k^d}^{\oplus N}]$, for some finite flat morphism $\pi:X\to\mathbb{P}_k^d$.  If,  moreover,  $\pi_*\mathcal{E}$ is Gieseker semistable,  then $\mathcal{E}$ is Ulrich.  
\end{coro}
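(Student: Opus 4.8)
The plan is to feed the finite flat cover of $\P^d_k$ produced by Noether normalization into the $K^0_\Q$--surjectivity of Corollary \ref{cmain}, then to promote the resulting $K$--theory class to a genuine locally free sheaf by passing to an affine Jouanolou model and descending, and finally to use the semistability hypothesis to rigidify a cohomology computation on $\P^d_k$. Concretely, use Lemma \ref{finite} to fix a finite flat $\pi:X\to\P^d_k$ with $\pi^*\O_{\P^d_k}(1)=\O_X(1)$ and set $\delta=\deg\pi$; by Corollary \ref{cmain} there is $e\in K^0_\Q(X)$ with $\pi_*(e)=[\O_{\P^d_k}]$, and comparing generic ranks (for a finite flat map of degree $\delta$ one has $\mathrm{rk}\circ\pi_*=\delta\cdot\mathrm{rk}$) forces $\mathrm{rk}(e)=1/\delta$. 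Let $p:\tilde X\to X$ be a Jouanolou device (Lemma \ref{jou}); $\tilde X$ is a smooth affine variety over $k$. Choose $N$ to be a sufficiently large multiple of the denominator of $e$, so that $Ne\in K^0(X)=K_0(X)$ and $N/\delta\geq\dim\tilde X$, and write $Ne=[\mathcal A]-[\mathcal B]$ with $\mathcal A,\mathcal B$ locally free on $X$. Since $\pi_*(Ne)=N[\O_{\P^d_k}]=[\O_{\P^d_k}^{\oplus N}]$, it now suffices to realise $Ne$ as the class of a single locally free sheaf on $X$.

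By Theorem \ref{syz} applied to $X$ and to $\tilde X$ (smooth, noetherian, affine, hence with an ample invertible sheaf), together with Theorem \ref{inv}, the pullback $p^*:K^0(X)\to K^0(\tilde X)$ is an isomorphism. On the affine scheme $\tilde X$, identify locally free sheaves with finitely generated projective modules over $A=\Gamma(\tilde X,\O_{\tilde X})$ (Remark \ref{proj}), and pick $\mathcal Q$ with $p^*\mathcal B\oplus\mathcal Q\cong\O_{\tilde X}^{\oplus m}$; then $p^*(Ne)=[p^*\mathcal A\oplus\mathcal Q]-m[\O_{\tilde X}]$, and the bundle $p^*\mathcal A\oplus\mathcal Q$ has rank $\mathrm{rk}(\mathcal A)+(m-\mathrm{rk}(\mathcal B))=m+N/\delta\geq m+\dim\tilde X$. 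By Serre's splitting theorem one may peel off $m$ trivial rank--one summands, $p^*\mathcal A\oplus\mathcal Q\cong\mathcal P\oplus\O_{\tilde X}^{\oplus m}$ for some locally free $\mathcal P$ on $\tilde X$, whence $[\mathcal P]=p^*(Ne)$ in $K^0(\tilde X)$. This is the point where $d\leq 3$ is used: by Theorem \ref{desc}, $\mathcal P$ admits a descent datum along $p$, so $\mathcal P\cong p^*\mathcal E$ for a locally free sheaf $\mathcal E$ on $X$; since $p^*$ is injective on $K^0$, we get $[\mathcal E]=Ne$ in $K^0(X)$, and therefore $\pi_*([\mathcal E])=[\O_{\P^d_k}^{\oplus N}]$.

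For the last assertion, note first that $\pi_*\mathcal E$ is locally free on $\P^d_k$, because $\pi$ is finite flat and $\mathcal E$ is locally free. The equality $[\pi_*\mathcal E]=[\O_{\P^d_k}^{\oplus N}]$ in $K_0(\P^d_k)$ says that $\pi_*\mathcal E$ has the Hilbert polynomial of $\O_{\P^d_k}^{\oplus N}$; in particular $c_1=0$, so if $\pi_*\mathcal E$ is Gieseker--semistable it is $\mu$--semistable of slope $0$, and its Jordan--H\"older constituents are the stable sheaves of reduced Hilbert polynomial $\binom{m+d}{d}$, namely only $\O_{\P^d_k}$. Since $\mathrm{Ext}^1(\O_{\P^d_k},\O_{\P^d_k})=H^1(\P^d_k,\O)=0$, that filtration splits step by step, so $\pi_*\mathcal E\cong\O_{\P^d_k}^{\oplus N}$. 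Finally, as $\pi$ is finite with $\pi^*\O_{\P^d_k}(1)=\O_X(1)$, the projection formula (and $R^{>0}\pi_*=0$) gives $H^i(X,\mathcal E\otimes\O_X(j))\cong H^i(\P^d_k,\pi_*\mathcal E\otimes\O_{\P^d_k}(j))\cong H^i(\P^d_k,\O_{\P^d_k}(j))^{\oplus N}$, which vanishes for all $i$ whenever $-d\leq j\leq -1$; hence $\mathcal E$ is Ulrich (this last implication is also exactly the pushforward characterisation of Ulrich bundles along finite linear projections, \cite[Theorem 1.1]{beu}).

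The genuine obstacle is the middle step---replacing the $K$--theory class $Ne$ by an honest vector bundle---which really does require the affine model $\tilde X$, Serre's splitting theorem, and the descent statement Theorem \ref{desc}; this is precisely why the corollary is confined to $\dim X\leq 3$ (outside that range one is left only with the class in the Grothendieck group). The only input of the last paragraph not already recorded in the excerpt is the classical fact that $\O_{\P^d_k}$ is the unique Gieseker--stable sheaf on $\P^d_k$ with reduced Hilbert polynomial $\binom{m+d}{d}$, which one can prove by restricting to a general hyperplane (Mehta--Ramanathan) and inducting on $d$, the case $d=1$ being elementary.
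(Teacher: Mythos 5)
Your argument for the main assertion is correct and follows the same route as the paper: Noether normalization (Lemma \ref{finite}), $K^0_{\Q}$--surjectivity (Corollary \ref{cmain}), passage to a Jouanolou device (Lemma \ref{jou}), realization of a high-rank stable class by an honest projective module via Serre's splitting theorem on the affine model (the paper cites \cite[IX.4.1.(a)]{bass} for exactly the peeling-off step you spell out), descent via Theorem \ref{desc}, and injectivity of $p^*$ via Theorems \ref{syz} and \ref{inv}. The only genuine divergence is the second assertion. The paper disposes of it in one line by citing Mehta--Nori \cite[Theorem 1]{mn}, whereas you reprove it: $\pi_*\mathcal{E}$ is locally free with the Hilbert polynomial of $\mathcal{O}_{\P^d}^{\oplus N}$, its Jordan--H\"older factors are Gieseker-stable with reduced Hilbert polynomial $p_{\mathcal{O}}$, the only such stable sheaf is $\mathcal{O}_{\P^d}$, and $\mathrm{Ext}^1(\mathcal{O},\mathcal{O})=0$ splits the filtration; then the finite-pushforward characterization of Ulrich bundles \cite[Theorem 1.1]{beu} finishes. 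This is a legitimate alternative and makes the mechanism transparent, but note that the classical uniqueness fact you defer to (``$\mathcal{O}_{\P^d}$ is the unique stable sheaf with reduced Hilbert polynomial $p_{\mathcal{O}}$'') is essentially the content of the Mehta--Nori theorem itself, so you have not really avoided that input, only relocated it; and your parenthetical appeal to Mehta--Ramanathan for restriction to a \emph{general hyperplane} is not quite the right tool (that theorem concerns hypersurfaces of sufficiently large degree), so if you want the self-contained version you should either invoke a Flenner/Langer-type restriction theorem or argue directly with sections as in \cite{mn}. Everything else, including the rank bookkeeping $\mathrm{rk}\circ\pi_*=\deg(\pi)\cdot\mathrm{rk}$ and the observation that $d\le 3$ enters only through Theorem \ref{desc}, matches the paper.
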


\begin{proof} Produce, with Lemma \ref{finite}, a finite flat morphism $\pi:X\rightarrow\P_k^d$ with $\pi^*\mathcal{O}_{\P_k^d}(1)\simeq\mathcal{O}_X(1)$. As such, we may use Corollary \ref{cmain} to find $e\in K^0(X)$ with $\pi_*(e)=[\mathcal{O}_{\P_k^d}^{\oplus N}]$, for some $N>>0$ (due to the open nature of flatness \cite[III.Ex.9.1]{har}, the image of $\pi$ is closed and open, with connected target, so $\pi$ is indeed surjective). Then consider a Jouanolou device $p:\tilde{X}\to X$ (Lemma \ref{jou}). Since $\text{deg}(\pi)\text{rk}(e)=N>0$, replaicing $e$ with a high enough additive multiple of it, we may assume $\text{rk}(e)>2d$. Then Remark \ref{proj} allow us to use Serre's theorem \cite[IX.4.1.(a)]{bass} in order to write $p^*e=[\mathcal{F}]$, for some locally free sheaf $\mathcal{F}$ on $\tilde{X}$. Therefore, Theorem \ref{desc} says that $\mathcal{F}\simeq p^*\mathcal{E}$, for some locally fre sheaf $\mathcal{E}$ on $X$, and hence $e=[\mathcal{E}]$ (by Theorem \ref{inv}).  The second assertion is given by \cite[Theorem 1]{mn}.
\end{proof}

\begin{rema} {\em Of course, if \cite[5.1.1]{af} is positive, then Corollary \ref{sol} is true in any dimension.  We also notice that if one would knew that $\pi_*\mathcal{E}$ is also globally generated (e.g.  if $\mathcal{E}$ is regular in the Castelnuovo--Mumford sense with respect to some polarization \cite[Lecture 14]{mum}),  then $\mathcal{E}$ would be an Ulrich bundle, according to a theorem of Van de Ven \cite[Corollary to Theorem 3.2.1]{oss}. }
\end{rema}

\subsection{The derived approach} Here we point out a formal way for producing Ulrich bundles on surfaces, but this time the choice is to work in the derived settings. The definition of an Ulrich bundle is a relative one: it is given within a projective embedding of the variety,  so in order to introduce a derived notion of this class of bundles, one must first have a notion of ampleness in a derived category. For a nonsingular projective variety $X$ over $k$ (always assumed to be like this in what follows), denote by D$_{\text{coh}}^b(X)$ the bounded derived category of the abelian category $\textbf{Coh}(X)$ of coherent sheaves on $X$ (cf.  \cite{huy}).  

\begin{defi}\label{bondal} (Bondal-Orlov,  \cite[2.72]{huy}) Let $\mathcal{A}$ be an abelian category, enriched in a monoidal category. A sequence of objects $(L_i)_{i\in \Z}$ from $\mathcal{A}$ is said to be ample if for any $A\in\text{ob}(\mathcal{A})$ there exists $i_0(A)\in \Z$ such that, whenever $i<i_0$, one has (in D$^b(\mathcal{A})$):

(a) The natural arrow Hom$(L_i, A)\otimes L_i\rightarrow A$  is an epimorphism;

(b) If $j\neq 0$, then Hom$(L_i, A[j])=0$;

(c) Hom$(A, L_i)=0$. 
\end{defi}

 The condition on enrichness is present in order to make $(a)$ unambigously. For instance, if $\mathcal{A}$ is enriched in the category of finite--dimensional vector spaces over $k$, $\textbf{Vect}_{<\infty}(k)$ (say $\mathcal{A}$ is a $k$ - linear category), given $A\in\text{ob}(\mathcal{A})$ and $V\in\text{ob}(\textbf{Vect}_{<\infty}(k))$, by $V\otimes A$ is meant the object of $\mathcal{A}$ representing the functor $\mathcal{A}\rightarrow\textbf{Vect}_{<\infty}(k), B\mapsto\text{Hom}_{\textbf{Vect}_{<\infty}(k)}(V, \text{Hom}_{\mathcal{A}}(A, B))$ (the demand of the classical adjunction). In this case, it follows immediately that $V\otimes A$ exists because dim$_kV<\infty$.

 The following says that the previous definition is good: 

\begin{exm}{\em  \cite[3.18]{huy} \label{ample}If $X$ is a projective variety over $k$ and $L$ is an ample line bundle on $X$, then $(L^{\otimes i})_{i\in \Z}$ is an ample sequence in $\textbf{Coh}(X)$. }
\end{exm}

A further abstraction of Definition \ref{bondal} steems from linear algebra reasonings: the notions of a spanning class in a triangulated category \cite[1.47, 2.73]{huy}. Anyway,  in general, we shall rely on Example \ref{ample}.

In order to state the main definition with some degree of generality, we need to have a concept of dimension. Following Cartan--Eilenberg, we proceed as follows:  

\begin{defi} (relative dimension). Let $\mathcal{D}$ be a triangulated category, equiped with a $t$ - structure $(\mathcal{D}^{\leq 0}, \mathcal{D}^{\geq 0})$. If $\mathcal{A}$ denotes the core $\mathcal{D}^{\leq 0}\cap\mathcal{D}^{\geq 0}$, then by the dimension of $(\mathcal{D}, (\mathcal{D}^{\leq 0}, \mathcal{D}^{\geq 0}))$, denoted dh$_t(\mathcal{D})$, is meant the homological dimension of $\mathcal{A}$ \cite[IV. 4.4]{gm}. 
\end{defi}

\begin{defi}\label{derul} Let $\mathcal{D}$ be a triangulated category, endowed with a $t$ - structure having the core such that dh$_t(\mathcal{D})<\infty$, and let $(L_i)_i$ be a sequence of objects in the core of the $t$ -structure. An object $E$ from $\mathcal{D}$ will be called an Ulrich object with respect to $(L_i)_i$ if Hom$(E, L_i[j])=0$ for any $j$ and any $-\text{dh}_t(\mathcal{D})\leq i\leq-1$; if, instead, these vanishings take place for each $1\leq i\leq\text{dh}_t(\mathcal{D})$, we say that $E$ is a co--Ulrich object. 
\end{defi}

\begin{rema}\label{emb} {\em Given an abelian category $\mathcal{A}$, we have an embedding $\mathcal{A}\rightarrow\text{D}^*(\mathcal{A})$, the image being the full subcategory formed by the H$^0$-complexes, for any $*\in\{-, b, +\}$ \cite[III.5.2]{gm}. }
\end{rema}

The derived category does not forget the homological algebra of the initial abelian category: 

\begin{prop}\label{notforg}\cite[2.56]{huy} Let $\mathcal{A}$ be an abelian category. If we see $\mathcal{A}$ embedded in D$(\mathcal{A})$ as in Remark \ref{emb}, then 
\begin{align*}
\text{Ext}_{\mathcal{A}}^i(A, B)\simeq\text{Ext}_{D(\mathcal{A})}^i(A, B),
\end{align*}
for all $A, B\in\text{ob}(\mathcal{A})$ and all $i$. 
\end{prop}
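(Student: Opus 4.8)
The plan is to read the right-hand side as $\text{Ext}^i_{D(\mathcal{A})}(A,B)=\text{Hom}_{D(\mathcal{A})}(A,B[i])$, to interpret $\text{Ext}^i_{\mathcal{A}}$ as Yoneda $\text{Ext}$ (which agrees with the derived-functor $\text{Ext}$ whenever the latter is defined), and to compare the two groups one cohomological degree at a time. The degrees $i\le 0$ are formal. For $i<0$ both sides vanish: there are no Yoneda extensions of negative length, so $\text{Ext}^i_{\mathcal{A}}(A,B)=0$, while for the standard $t$-structure on $D(\mathcal{A})$ \cite[IV]{gm} one has $A\in D^{\le 0}$ and $B[i]\in D^{\ge 1}$ as soon as $i\le -1$, hence $\text{Hom}_{D(\mathcal{A})}(A,B[i])=0$ by the orthogonality axiom of a $t$-structure. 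For $i=0$ the embedding $\mathcal{A}\hookrightarrow D(\mathcal{A})$ of Remark \ref{emb} is fully faithful (it realizes $\mathcal{A}$ as the heart of the standard $t$-structure), so $\text{Hom}_{D(\mathcal{A})}(A,B)=\text{Hom}_{\mathcal{A}}(A,B)=\text{Ext}^0_{\mathcal{A}}(A,B)$.

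The content is the case $i>0$, and here I would exhibit explicit mutually inverse maps. From a Yoneda class, represented by an exact sequence $0\to B\to E_{i-1}\to\cdots\to E_0\to A\to 0$, form the complex $E^\bullet=(E_{i-1}\to\cdots\to E_0)$ placed in cohomological degrees $1-i,\ldots,0$; then the map $B[i-1]\to E^\bullet$ sending $B$ into $E_{i-1}$ has cone quasi-isomorphic to $A$ (its only nonvanishing cohomology is $A$ in degree $0$), and the connecting morphism of the resulting distinguished triangle $B[i-1]\to E^\bullet\to A\to B[i]$ is the desired element of $\text{Hom}_{D(\mathcal{A})}(A,B[i])$. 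Conversely, a morphism $\phi\colon A\to B[i]$ is a roof $A\xleftarrow{s}C^\bullet\xrightarrow{f}B[i]$ with $s$ a quasi-isomorphism; applying the truncation functor $\tau_{\le 0}$ we may assume $C^j=0$ for $j>0$, and then the stupid truncation at degree $-i$ produces, using $H^0(C^\bullet)=A$ and $H^j(C^\bullet)=0$ for $j\ne 0$, an exact sequence $0\to K\to C^{-i}\to\cdots\to C^0\to A\to 0$ with $K=\ker(d^{-i})=\operatorname{im}(d^{-i-1})$; since $f^{-i}\colon C^{-i}\to B$ kills $\operatorname{im}(d^{-i-1})=K$ it factors through $\operatorname{im}(d^{-i})$, and pushing the induced $i$-extension $0\to\operatorname{im}(d^{-i})\to C^{-i+1}\to\cdots\to C^0\to A\to 0$ forward along $\operatorname{im}(d^{-i})\to B$ yields a Yoneda class, which I would take as the image of $\phi$.

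The main obstacle is bookkeeping rather than ideas: one must check that the second assignment is independent of the chosen roof and of the intermediate truncations, that the first is independent of the representative of the Yoneda class (Yoneda equivalence), and that the two constructions are mutually inverse. This is the usual somewhat lengthy diagram chase underlying the identification of $\text{Hom}_{D(\mathcal{A})}(A,B[i])$ with Yoneda $\text{Ext}$, and it is where I would expect to spend the bulk of the argument.

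When $\mathcal{A}$ has enough injectives the whole case $i>0$ admits a shortcut that I would prefer to record as well: resolving $B$ by an injective complex $I^\bullet$ gives $\text{Hom}_{D(\mathcal{A})}(A,B[i])=\text{Hom}_{K(\mathcal{A})}(A,I^\bullet[i])=H^i\big(\text{Hom}^\bullet_{\mathcal{A}}(A,I^\bullet)\big)=\text{Ext}^i_{\mathcal{A}}(A,B)$. Equivalently, both $\text{Ext}^\bullet_{\mathcal{A}}(A,-)$ and $\text{Hom}_{D(\mathcal{A})}(A,-[\bullet])$ are cohomological $\delta$-functors on $\mathcal{A}$ (a short exact sequence in $\mathcal{A}$ becomes a distinguished triangle in $D(\mathcal{A})$), they agree in degree $0$, and both are effaced in positive degrees by injectives (for the derived one, $\text{Hom}_{D(\mathcal{A})}(A,I[i])=0$ for $i>0$ since an injective is its own injective resolution), so they coincide by the uniqueness of universal $\delta$-functors.
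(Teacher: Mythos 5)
The paper does not prove this proposition: it is quoted from \cite[2.56]{huy} and used as a black box, so there is no internal argument to compare against. Your proposal is correct, and in fact contains two complete routes. The injective-resolution/$\delta$-functor shortcut you record at the end is essentially the proof in the cited source (Huybrechts works throughout with abelian categories having enough injectives, and the key input is $\text{Hom}_{D(\mathcal{A})}(A,I^\bullet)=\text{Hom}_{K(\mathcal{A})}(A,I^\bullet)$ for a bounded-below complex of injectives). Your main argument---the explicit comparison with Yoneda $\text{Ext}$ via the triangle $B[i-1]\to E^\bullet\to A\to B[i]$ in one direction and truncation of a roof in the other---is the more general statement, valid for an arbitrary abelian category, and is the content of \cite[III.5]{gm}; the well-definedness and mutual-inverse checks you defer are genuinely where the length lies, but they are standard and none of them fails. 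The low-degree cases ($i<0$ by $t$-structure orthogonality, $i=0$ by full faithfulness of the embedding of Remark \ref{emb}) are handled correctly. One small point worth keeping in mind for the geometric applications in this paper: $\textbf{Coh}(X)$ itself does not have enough injectives, so either the Yoneda argument or the passage to quasi-coherent sheaves is actually needed there, which is a reason to prefer your general argument over the shortcut.
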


In what follows, we ilustrate Definition \ref{derul} in the geometric context. First, to justify the definition: \\

\begin{exm}{\em The main example is the classical notion. Let $X$ be a nonsingular projective variety over $k$ and let $d=\text{dim}X$. Say $(L_i)$ is an ample sequence in $\textbf{Coh}(X)$. An object $\mathcal{E}^{\bullet}$ from D$_{\text{coh}}^b(X)$ is to be called a derived Ulrich sheaf on $X$ with respect to $(L_i)$ if it is an Ulrich object. As dh$_{t_{can}}(\text{D}_{coh}^b(X))=\text{dh}(\textbf{Coh}(X))=\text{dim}X$, this means that Ext$_{D_{coh}^b(X)}^i(\mathcal{E}^{\bullet}, L_j^{\vee})=0$ for any $i$ and any $1\leq j\leq d$ \cite[3.7,(i)]{huy}. 

Say $\mathcal{O}_X(1)$ is a very ample line bundle which gives a projective embedding of $X$. Then $(\mathcal{O}_X(i))_{i\in \Z}$ is an ample sequence in D$_{\text{coh}}^b(X)$ (by Example \ref{ample}) and let $E$ be an Ulrich vector bundle on $X$ with respect to this embedding. If we see the associated sheaf of sections $\mathcal{E}$ in D$_{\text{coh}}^b(X)$ (by Remark \ref{emb}), then $\mathcal{E}^{\vee}$ is a derived Ulrich sheaf: for any $i$ and any $1\leq j\leq d$, we have
\begin{align*}
&0\overset{\text{\cite[Th.1.1,(2)]{beu}}}{=}\text{H}^i(X, \mathcal{E}(-j))\overset{\text{\cite[III.6.3.(c)]{har}}}{\simeq}\text{Ext}_X^i(\mathcal{O}_X, \mathcal{E}\otimes \mathcal{O}_X(-j)) \\
&\overset{\text{\cite[III.6.7]{har}}}{\simeq}\text{Ext}_X^i(\mathcal{E}^{\vee}, \mathcal{O}_X(-j))\overset{\text{Prop.\ref{notforg}}}{\simeq}\text{Ext}_{D_{coh}^b(X)}^i(\mathcal{E}^{\vee}, \mathcal{O}_X(-j)).
\end{align*}

While here, we notice how $\mathcal{E}^{\vee}$ becomes in the derived realm. There is a linear resolution $0\rightarrow\mathcal{L}_{c}\overset{d_c}{\rightarrow}\ldots\overset{d_0}{\rightarrow}\mathcal{L}_0\rightarrow\mathcal{E}\rightarrow 0$, where $c=\text{codim}(X, \P_k^N)$ and $\mathcal{L}_i=\mathcal{O}_X(-i)^{b_i}$ \cite{beu},  hence $\mathcal{E}^{\vee}\simeq\mathcal{L}^{\bullet}$ in D$^b_{\text{coh}}(X)$, where $\mathcal{L}^{\bullet}$ is represented by $\ldots\rightarrow\mathcal{O}_X(i+1)^{b_{i+1}}\rightarrow \mathcal{O}_X(i)^{b_i}\rightarrow\mathcal{O}_X(i-1)^{b_{i-1}}\rightarrow\ldots$. }
\end{exm}

\begin{exm}{\em The Beilinson resolution of the diagonal shows that $\mathcal{O}, \mathcal{O}(-1),\ldots ,\mathcal{O}(-n)$ is a full exceptional sequence for $\text{D}^b_{\text{coh}}(\P_k^n)$ \cite[8.29]{huy}, so that $\mathcal{E}^{\bullet}$ is derived Ulrich relatively to $\mathcal{O}_{\P_k^n}(1)$ if and only if it belongs to the subcategory $\langle \mathcal{O}_{\P_k^n}\rangle$, i.e. any derived Ulrich object on $(\P_k^n, \mathcal{O}_{\P_k^n}(1))$ is obtained by taking finite direct sums or iterated cones with extremals shifts of the structure sheaf. 

It is thus more interesting to study derived Ulrich sheaves on the projective space relatively to non--tautological embeddings of it. 

More generally, one can extend the Beilinson spectral sequence to the case of a projective bundle $\pi:Y=\P\mathcal{E}\rightarrow X$, and obtain a semi-orthogonal decomposition 
\begin{align*}
\text{D}^b_{\text{coh}}(Y)=\langle \mathcal{O}_Y(-n),\ldots ,\mathcal{O}_Y(-1), \pi^*\text{D}^b_{\text{coh}}(X)\rangle,
\end{align*}
where $n+1=\text{rk}(\mathcal{E})$. Thus, the derived Ulrich sheaves on $(Y, \mathcal{O}_Y(1))$ are determined by those on $X$. }
\end{exm}

\begin{exm} {\em When $X$ is a nonsingular projective curve,  the derived Ulrich sheaves on $X$ are direct sums of shifted Ulrich bundles (by Corollary \ref{curves} and \cite[1.25]{br}). }
\end{exm}

\begin{exm}{\em Here we try to make justice to the first-look arridity surrounding Definition \ref{derul}. Assume we have a glueing data given in the language of a six-functor formalism: $Rj_*, j_!:\text{D}(U)\rightarrow\text{D}(X), j^*:\text{D}(X)\rightarrow\text{D}(U), i_*:\text{D}(Y)\rightarrow\text{D}(X), i^*, i^!:\text{D}(X)\rightarrow\text{D}(Y)$, satisfying the defining compatibilities \cite[IV,  4]{gm}. Then the Beilinson-Bernstein-Deligne theorem says that the sequence 
\begin{align*}
\text{D}(Y)\overset{i_*}{\rightarrow}\text{D}(X)\overset{j^*}{\rightarrow}\text{D}(U)
\end{align*} 
is an exact triple and that any pair of $t$ - structures $(\text{D}^{\leq 0}(Y), \text{D}^{\geq 0}(Y))$ and $(\text{D}^{\leq 0}(U), \text{D}^{\geq 0}(U))$ determines a unique compatible $t$ - structure on $\text{D}(X)$ (i.e., $i_*$ and $j^*$ are $t$-exact functor). Moreover, the $t$-structure given by this glueing process is explicit:
\begin{align*}
&\text{D}^{\leq 0}(X)=\{\mathcal{F}\in\text{D}(X); j^*\mathcal{F}\in\text{D}^{\leq 0}(U), i^*\mathcal{F}\in\text{D}^{\leq 0}(Y)\} \\
&\text{D}^{\geq 0}(X)=\{\mathcal{F}\in\text{D}(X); j^*\mathcal{F}\in\text{D}^{\geq 0}(U), i^!\mathcal{F}\in\text{D}^{\geq 0}(Y)\}.
\end{align*} 
The converse is more easy: if $\mathcal{C}\rightarrow\mathcal{D}\rightarrow\mathcal{E}$ is an exact triple of triangulated categories, then plainly any $t$-structure on $\mathcal{D}$ determines unique compatible $t$-structures on $\mathcal{C}$ and $\mathcal{E}$.

Thus,  Definition \ref{derul} gives the possibility to construct $non-standard$ Ulrich objects  by glueing translated $t$ - structures from D$(Y)$ and D$(U)$.  }
\end{exm}

\begin{rema}\label{switch} {\em When needed, we may switch the variables via duals in the definition of a derived Ulrich sheaf. Indeed, for any $\mathcal{E}^{\bullet}, \mathcal{F}^{\bullet}\in\text{D}_{\text{coh}}^b(X)$ we have, for each $i$, Ext$^i(\mathcal{E}^{\bullet}, \mathcal{F}^{\bullet})\simeq$ Ext$^i({\mathcal{F}^{\bullet}}^{\vee}, {\mathcal{E}^{\bullet}}^{\vee})$:
\begin{align*}
&R\mathcal{H}om({\mathcal{F}^{\bullet}}^{\vee}, {\mathcal{E}^{\bullet}}^{\vee}[i])=R\mathcal{H}om(R\mathcal{H}om(\mathcal{F}^{\bullet}, \O_X), {\mathcal{E}^{\bullet}}^{\vee}[i]) \\
&\simeq R\mathcal{H}om(\O_X, \mathcal{F}^{\bullet}\otimes {\mathcal{E}^{\bullet}}^{\vee}[i])\simeq R\mathcal{H}om(\O_X, \mathcal{F}^{\bullet}\otimes {\mathcal{E}^{\bullet}[-i]}^{\vee}) \\
&\simeq R\mathcal{H}om(\O_X, {\mathcal{E}^{\bullet}[-i]}^{\vee}\otimes\mathcal{F}^{\bullet})\simeq R\mathcal{H}om(\O_X, R\mathcal{H}om(\mathcal{E}^{\bullet}[-i], \mathcal{F}^{\bullet})) \\
&\simeq R\mathcal{H}om(\mathcal{E}^{\bullet}[-i], \mathcal{F}^{\bullet}),
 \end{align*}\\
so by applying $R\Gamma$ and the translation $[i]$ we get the conclusion \cite[3.3, Compatibilities]{huy}.}
\end{rema}

Next, we see that some basic properties of Ulrich vector bundles extends to derived Ulrich sheaves (cf. \cite[2]{beu}). 

\begin{prop}\label{divisor}Let $X$ be a nonsingular projective variety over $k$ and let $\mathcal{O}_X(1)$ be very ample on $X$ giving $X\hookrightarrow \mathbb{P}_k^N$. Say $\iota:Y\hookrightarrow X$ is a hyperplane section in this embedding, and let $\iota^*:\text{D}_{coh}^b(X)\rightarrow\text{D}_{coh}^b(Y)$ be the induced derived pullback functor. Then for any derived Ulrich sheaf $\mathcal{E}^{\bullet}$ on $X$ with respect to $(\mathcal{O}_X(i))_i$ we have that $\iota^*\mathcal{E}^{\bullet}$ is a derived Ulrich sheaf on $Y$ with respect to $(\iota^*\mathcal{O}_X(i))_i$
\end{prop}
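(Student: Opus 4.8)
The plan is to reduce the derived Ulrich condition for $\iota^*\mathcal{E}^\bullet$ on $Y$ to the one already assumed for $\mathcal{E}^\bullet$ on $X$, by pushing everything onto $X$ via $\iota_*$ and exploiting the short exact sequence $0\to\mathcal{O}_X(-1)\to\mathcal{O}_X\to\iota_*\mathcal{O}_Y\to 0$ cut out by the hyperplane. The first step is bookkeeping with dimensions and $t$-structures: since $Y$ is a nonsingular hyperplane section it has $\dim Y=\dim X-1=:d-1$, and $\mathrm{dh}_{t_{\mathrm{can}}}(\mathrm{D}^b_{\mathrm{coh}}(Y))=d-1$, while the relevant line bundles on $Y$ are $\iota^*\mathcal{O}_X(i)$. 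So I must show $\mathrm{Ext}^m_{\mathrm{D}^b_{\mathrm{coh}}(Y)}(\iota^*\mathcal{E}^\bullet,\iota^*\mathcal{O}_X(i))=0$ for all $m$ and all $-(d-1)\le i\le -1$ (or, via Remark \ref{switch}, the dual vanishing in the other slot).

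The second step is the key computation. By adjunction $\iota^*\dashv \iota_*$ (derived), for each $i$ and $m$ one has
\begin{align*}
\mathrm{Ext}^m_{Y}(\iota^*\mathcal{E}^\bullet,\iota^*\mathcal{O}_X(i))\simeq \mathrm{Ext}^m_{X}(\mathcal{E}^\bullet,\iota_*\iota^*\mathcal{O}_X(i))\simeq \mathrm{Ext}^m_{X}(\mathcal{E}^\bullet,\mathcal{O}_X(i)\otimes\iota_*\mathcal{O}_Y).
\end{align*}
Now tensoring the structure-sequence of $Y$ by the line bundle $\mathcal{O}_X(i)$ gives a distinguished triangle $\mathcal{O}_X(i-1)\to\mathcal{O}_X(i)\to\mathcal{O}_X(i)\otimes\iota_*\mathcal{O}_Y\to$, and applying $\mathrm{RHom}_X(\mathcal{E}^\bullet,-)$ yields a long exact sequence relating $\mathrm{Ext}^m_X(\mathcal{E}^\bullet,\mathcal{O}_X(i)\otimes\iota_*\mathcal{O}_Y)$ to $\mathrm{Ext}^m_X(\mathcal{E}^\bullet,\mathcal{O}_X(i-1))$ and $\mathrm{Ext}^{m+1}_X(\mathcal{E}^\bullet,\mathcal{O}_X(i))$. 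For $-(d-1)\le i\le -1$ we have both $i$ and $i-1$ in the range $-d\le \cdot\le -1$, so by hypothesis on $\mathcal{E}^\bullet$ (written with respect to $(\mathcal{O}_X(j))_j$, using the sign convention of Definition \ref{derul} that the vanishing is $\mathrm{Hom}(\mathcal{E}^\bullet,L_i[j])=0$ for $-d\le i\le -1$) both outer terms vanish for every $m$, hence the middle term vanishes, which is exactly what is needed.

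The main obstacle — really the only subtle point — is matching the indexing conventions: Definition \ref{derul} phrases the Ulrich condition through an abstract ample sequence $(L_i)_i$, and in the geometric example this is $(\mathcal{O}_X(i))_i$, so one must be careful that "$\mathcal{E}^\bullet$ Ulrich with respect to $(\mathcal{O}_X(i))_i$" means $\mathrm{Ext}^\bullet(\mathcal{E}^\bullet,\mathcal{O}_X(i))=0$ precisely for $-d\le i\le -1$, and then that the two bundles $\mathcal{O}_X(i)$, $\mathcal{O}_X(i-1)$ appearing from the hyperplane triangle both stay inside that window when $i$ runs over $-(d-1),\dots,-1$. Once this is set up correctly the argument is a one-line triangle chase; I would also remark that since $\iota$ is a regular embedding of a divisor, $\iota^*$ really is the naive restriction on the line bundles, so $\iota^*\mathcal{O}_X(i)=\mathcal{O}_Y(i)$ and $(\iota^*\mathcal{O}_X(i))_i$ is genuinely an ample sequence on $Y$ by Example \ref{ample}, so that the statement "$\iota^*\mathcal{E}^\bullet$ is a derived Ulrich sheaf on $Y$" is meaningful. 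A secondary routine check is that $\iota^*\mathcal{E}^\bullet$ indeed lands in $\mathrm{D}^b_{\mathrm{coh}}(Y)$, which holds because $\iota$ has finite Tor-dimension (it is a divisor in a smooth variety).
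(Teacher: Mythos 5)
Your proposal is correct and follows essentially the same route as the paper: adjunction $\iota^*\dashv\iota_*$ plus the projection formula to rewrite $\mathrm{Ext}^m_Y(\iota^*\mathcal{E}^\bullet,\iota^*\mathcal{O}_X(i))$ as $\mathrm{Ext}^m_X(\mathcal{E}^\bullet,\mathcal{O}_X(i)\otimes\iota_*\mathcal{O}_Y)$, then the long exact sequence from the twisted structure sequence $0\to\mathcal{O}_X(i-1)\to\mathcal{O}_X(i)\to\mathcal{O}_X(i)\otimes\iota_*\mathcal{O}_Y\to 0$, with the observation that both $i$ and $i-1$ stay in the window $[-d,-1]$ when $-(d-1)\le i\le -1$. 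The only difference is cosmetic (you index by $\mathcal{O}_X(i)$ directly where the paper writes $L_j^{\vee}$ with $1\le j\le d-1$), and your added checks on ampleness of the restricted sequence and finite Tor-dimension of $\iota$ are consistent with the paper's remarks.
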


\begin{proof} Clearly, the sequence of pullbacks is an ample one (Example \ref{ample}). Let $L_j=\mathcal{O}_X(j)$; we have
\begin{align*}
&\text{Ext}^i_{\text{D}^b_{\text{coh}}(Y)}(\iota^*\mathcal{E}^{\bullet}, (\iota^*L_j)^{\vee})\simeq\text{Ext}^i_{\text{D}^b_{\text{coh}}(Y)}(\iota^*\mathcal{E}^{\bullet}, \iota^*L_j^{\vee})\\
&(\text{because the pullback commutes with tensor products}\\ 
&\text{and commutes with duals thanks to the trace map})\\
&\simeq\text{Hom}_{\text{D}_{\text{coh}}^b(Y)}(\iota^*\mathcal{E}^{\bullet}, (\iota^*L_j^{\vee})[i])\simeq\text{Hom}_{\text{D}_{\text{coh}}^b(X)}(\mathcal{E}^{\bullet}, \iota_*(\mathcal{O}_Y\otimes \iota^*(L_j^{\vee}[i])))\\
&(\text{by the adjunction between pullback and direct image, and the pullback}\\
&\text{commutes with translation since it is applied on a locally free sheaf})\\
&\simeq\text{Hom}_{\text{D}_{\text{coh}}^b(X)}(\mathcal{E}^{\bullet}, \iota_*\mathcal{O}_Y\otimes L_j^{\vee}[i])\\
&(\text{by the projection formula})\\
&\simeq\text{Ext}^i_{\text{D}^b_{\text{coh}}(X)}(\mathcal{E}^{\bullet}, \iota_*\mathcal{O}_Y\otimes L_j^{\vee})
\end{align*}

Now let $d=$ dim$X$ and $1\leq j\leq d-1$. Because $Y$ is a hyperplane section of $X$ by means of $L$, we have the short exact sequence (in $\textbf{Coh}(X))$ $0\rightarrow L_{j+1}^{\vee}\rightarrow L_j^{\vee}\rightarrow \iota_*\O_Y\otimes L_j^{\vee}\rightarrow 0$. We see this sequence in D$_{\text{coh}}^b(X)$ according to Remark \ref{emb}. Then, since any short exact sequence in the category of bounded complexes Kom$^b$ is quasi-isomorphic with a distinguished triangle \cite[III.3.5]{gm}, we have a long exact sequence
\begin{align*}
&\dots\rightarrow\text{Ext}^i(\mathcal{E}^{\bullet}, L_{j+1}^{\vee})=0\rightarrow\text{Ext}^i(\mathcal{E}^{\bullet}, L_j^{\vee})=0\rightarrow \\
&\rightarrow\text{Ext}^i(\mathcal{E}^{\bullet}, \iota_*\mathcal{O}_Y\otimes L_j^{\vee})\rightarrow\text{Ext}^{i+1}(\mathcal{E}^{\bullet}, L_{j+1}^{\vee})=0\rightarrow \dots
\end{align*}
according to the hypothesis also. Therefore, for any $i$ and $1\leq j\leq$ dim$Y$ we have Ext$^i_{\text{D}_{\text{coh}}^b(Y)}(\iota^*\mathcal{E}^{\bullet}, (\iota^*L_j)^{\vee})=0$, hence the conclusion. 
\end{proof}

Now the other way around:

\begin{prop}Let $\pi:X\rightarrow Y$ be a finite morphism of nonsingular projective varieties over $k$, and let $\mathcal{O}_X(1)=\pi^*\mathcal{O}_Y(1)$, where $\mathcal{O}_Y(1)$ is (say) some ample line bundle on $Y$. Given $\mathcal{E}^{\bullet}\in\text{D}^b_{\text{coh}}(X)$, then ${\mathcal{E}^{\bullet}}^{\vee}$ is a derived Ulrich sheaf for $(X, \mathcal{O}_X(1))$ if and only if $(\pi_*\mathcal{E}^{\bullet})^{\vee}$ is a derived Ulrich bundle for $(Y, \mathcal{O}_Y(1))$.
\end{prop}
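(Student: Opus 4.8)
The plan is to show that the two derived-Ulrich conditions unwind to the vanishing of \emph{the same} family of hypercohomology groups, the bridge being the projection formula for the finite morphism $\pi$. For the set-up: since $\pi$ is finite it is affine and $R\pi_*=\pi_*$ is exact, so $\pi_*\mathcal{E}^\bullet\in\text{D}^b_{\text{coh}}(Y)$; moreover $\mathcal{O}_X(1)=\pi^*\mathcal{O}_Y(1)$ is ample (the pullback of an ample sheaf along a finite morphism is ample), so $(\mathcal{O}_X(i))_{i\in\Z}$ is an ample sequence in $\textbf{Coh}(X)$ by Example~\ref{ample} and the statement is meaningful on both sides. I take $\pi$ surjective, so that $\dim X=\dim Y=:d$; this is in fact forced, since otherwise the two Ulrich ranges $1\le j\le\dim X$ and $1\le j\le\dim Y$ would not coincide.

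Next I would unwind the two conditions. On the smooth variety $X$ every object of $\text{D}^b_{\text{coh}}$ is perfect and bidualizing is the identity, so $R\mathcal{H}om_X\big((\mathcal{E}^\bullet)^\vee,\mathcal{O}_X(-j)\big)\simeq\mathcal{E}^\bullet\otimes\mathcal{O}_X(-j)$, whence (taking $R\Gamma$; or directly via Remark~\ref{switch})
\[
\text{Ext}^i_{\text{D}^b_{\text{coh}}(X)}\big((\mathcal{E}^\bullet)^\vee,\mathcal{O}_X(-j)\big)\;\simeq\;\mathbb{H}^i\big(X,\mathcal{E}^\bullet\otimes\mathcal{O}_X(-j)\big).
\]
By Definition~\ref{derul} this says $(\mathcal{E}^\bullet)^\vee$ is a derived Ulrich sheaf for $(X,\mathcal{O}_X(1))$ if and only if $\mathbb{H}^i\big(X,\mathcal{E}^\bullet\otimes\mathcal{O}_X(-j)\big)=0$ for all $i$ and all $1\le j\le d$. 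The identical computation on $Y$ shows $(\pi_*\mathcal{E}^\bullet)^\vee$ is a derived Ulrich sheaf for $(Y,\mathcal{O}_Y(1))$ if and only if $\mathbb{H}^i\big(Y,\pi_*\mathcal{E}^\bullet\otimes\mathcal{O}_Y(-j)\big)=0$ for all $i$ and all $1\le j\le d$.

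It then remains to match the two families. Using $\mathcal{O}_X(-j)=\pi^*\mathcal{O}_Y(-j)$, the Leray identity $R\Gamma(X,-)\simeq R\Gamma(Y,R\pi_*(-))$, the derived projection formula, and $R\pi_*=\pi_*$ ($\pi$ being finite, there are no higher direct images), one obtains
\[
\mathbb{H}^i\big(X,\mathcal{E}^\bullet\otimes\mathcal{O}_X(-j)\big)\simeq\mathbb{H}^i\big(Y,R\pi_*(\mathcal{E}^\bullet\otimes\pi^*\mathcal{O}_Y(-j))\big)\simeq\mathbb{H}^i\big(Y,\pi_*\mathcal{E}^\bullet\otimes\mathcal{O}_Y(-j)\big)
\]
for all $i$ and $j$. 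Hence the two conditions hold simultaneously, which is the assertion. Every step is routine; the only point that needs a little care is the dualization $R\mathcal{H}om_X((\mathcal{E}^\bullet)^\vee,\mathcal{O}_X(-j))\simeq\mathcal{E}^\bullet\otimes\mathcal{O}_X(-j)$, which uses the smoothness of $X$ (so that complexes are perfect and bidual is the identity); but this is precisely what is packaged in Remark~\ref{switch}, so I anticipate no genuine obstacle.
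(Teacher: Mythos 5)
Your proof is correct and amounts to essentially the same argument as the paper's: both establish a degree-by-degree isomorphism $\text{Ext}^i_X\big((\mathcal{E}^{\bullet})^{\vee}, \mathcal{O}_X(-j)\big)\simeq\text{Ext}^i_Y\big((\pi_*\mathcal{E}^{\bullet})^{\vee}, \mathcal{O}_Y(-j)\big)$ by first applying the duality swap (your bidualization step is exactly Remark \ref{switch}) and then moving across $\pi$ --- the paper via the adjunction $\text{Hom}_Y(\mathcal{O}_Y(-j),\pi_*(-))\simeq\text{Hom}_X(\mathcal{O}_X(-j),-)$ together with $R\pi_*=\pi_*$, you via Leray plus the projection formula, which for a pulled-back line bundle is the same computation. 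Your added observation that $\pi$ should be taken surjective so that $\dim X=\dim Y$ and the two Ulrich ranges $1\le j\le d$ coincide is a worthwhile point of care that the paper passes over in silence.
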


\begin{proof} Indeed,
\begin{align*}
&\text{Ext}^i((\pi_*\mathcal{E}^{\bullet})^{\vee}, \mathcal{O}_Y(j))\overset{\text{Remark \ref{switch}}}{\simeq}\text{Ext}^i_{\text{D}_{\text{coh}}^b(Y)}(\mathcal{O}_Y(-j), \pi_*\mathcal{E}^{\bullet}) \\
&\simeq\text{Hom}_{\text{D}_{\text{coh}}^b(Y)}(\mathcal{O}_Y(-j), (\pi_*\mathcal{E}^{\bullet})[i]) \\
&\overset{\pi_*\circ\text{[ ]}\simeq\text{[ ]}\circ \pi_*}{\simeq}\text{Hom}_{\text{D}_{\text{coh}}^b(Y)}(\mathcal{O}_Y(-j), \pi_*(\mathcal{E}^{\bullet}[i])) \\
&\overset{\text{by adjunction}}{\simeq}\text{Hom}_{\text{D}^b_{\text{coh}}(X)}(\mathcal{O}_X(-j), \mathcal{E}^{\bullet}[i]) \\
&\simeq\text{Ext}^i({\mathcal{E}^{\bullet}}^{\vee}, \mathcal{O}_X(j))
\end{align*}
where the third identification is justified by the fact that $\pi$ is finite, and hence $R\pi_*=\pi_*$. 
\end{proof}

Also, the Newton binomial check: 

\begin{prop}Let $X$ and $Y$ be a nonsingular projective varieties over $k$, and $(L_i), (M_j)$ be ample sequences given by some projective embeddings of $X$ and $Y$, respectively. If $\mathcal{E}^{\bullet}\in$ ob$(D_{coh}^b(X))$ and $\mathcal{F}^{\bullet}\in$ ob$(D_{coh}^b(Y))$ are derived Ulrich sheaves with respect to $(L_i)$ and $(M_j)$ respectively, then the exterior (derived) tensor product $\mathcal{E}^{\bullet}\boxtimes \mathcal{F}^{\bullet}$ is a derived Ulrich sheaf on $X\times Y$ relatively to the sequence $(L_i\boxtimes M_{i-dim(X)})$. 
\end{prop}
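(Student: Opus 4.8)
The plan is to reduce the statement about $\mathcal{E}^\bullet\boxtimes\mathcal{F}^\bullet$ on $X\times Y$ to the hypotheses on $\mathcal{E}^\bullet$ and $\mathcal{F}^\bullet$ via a Künneth-type formula for $\mathrm{Ext}$-groups in the bounded derived category. First I would fix notation: let $d=\dim X$, $e=\dim Y$, so $\dim(X\times Y)=d+e$ and $\mathrm{dh}_{t_{can}}(\mathrm{D}^b_{\mathrm{coh}}(X\times Y))=d+e$. Writing $p:X\times Y\to X$ and $q:X\times Y\to Y$ for the projections, the twisting sequence is $N_i:=L_i\boxtimes M_{i-d}=p^*L_i\otimes q^*M_{i-d}$, which is an ample sequence on $X\times Y$ (the external product of the two projective embeddings is again a projective embedding, so Example \ref{ample} applies). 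I would then verify that $N_i$ indeed lies in the core $\mathbf{Coh}(X\times Y)$, since $L_i$ and $M_{i-d}$ are line bundles. The goal is to show $\mathrm{Ext}^k_{\mathrm{D}^b_{\mathrm{coh}}(X\times Y)}(\mathcal{E}^\bullet\boxtimes\mathcal{F}^\bullet, N_i^\vee)=0$ for all $k$ and all $-(d+e)\le i\le -1$.

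The key computational step is the Künneth isomorphism. Since $(\mathcal{E}^\bullet\boxtimes\mathcal{F}^\bullet)^\vee\simeq {\mathcal{E}^\bullet}^\vee\boxtimes{\mathcal{F}^\bullet}^\vee$ and $N_i^\vee=L_i^\vee\boxtimes M_{i-d}^\vee$, by Remark \ref{switch} it is equivalent to show $\mathrm{Ext}^k(N_i, \mathcal{E}^\bullet\boxtimes\mathcal{F}^\bullet)=0$ in the stated range. Now
\begin{align*}
R\mathcal{H}om_{X\times Y}(p^*L_i\otimes q^*M_{i-d},\ \mathcal{E}^\bullet\boxtimes\mathcal{F}^\bullet)&\simeq p^*R\mathcal{H}om_X(L_i,\mathcal{E}^\bullet)\otimes q^*R\mathcal{H}om_Y(M_{i-d},\mathcal{F}^\bullet),
\end{align*}
and applying $R\Gamma$ together with the Künneth formula for hypercohomology gives
\begin{align*}
\mathrm{Ext}^k_{X\times Y}(N_i, \mathcal{E}^\bullet\boxtimes\mathcal{F}^\bullet)\simeq\bigoplus_{a+b=k}\mathrm{Ext}^a_X(L_i,\mathcal{E}^\bullet)\otimes_k\mathrm{Ext}^b_Y(M_{i-d},\mathcal{F}^\bullet).
\end{align*}
So it suffices that for each $i$ in the range, either every $\mathrm{Ext}^a_X(L_i,\mathcal{E}^\bullet)$ vanishes or every $\mathrm{Ext}^b_Y(M_{i-d},\mathcal{F}^\bullet)$ vanishes. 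By Remark \ref{switch} again, $\mathrm{Ext}^a_X(L_i,\mathcal{E}^\bullet)\simeq\mathrm{Ext}^a_X({\mathcal{E}^\bullet}^\vee, L_i^\vee)$, and since ${\mathcal{E}^\bullet}^\vee$ — or rather, since the hypothesis is stated directly for $\mathcal{E}^\bullet$, I would track signs carefully; in any case $\mathcal{E}^\bullet$ being derived Ulrich means $\mathrm{Ext}^a_X(\mathcal{E}^\bullet, L_{-i}^\vee)=0$ for $1\le -i\le d$, equivalently the relevant vanishing holds whenever $1\le i\le d$ after the appropriate dualization. Similarly $\mathcal{F}^\bullet$ contributes vanishing when $1\le i-d\le e$, i.e.\ $d+1\le i\le d+e$.

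The arithmetic that closes the argument: for $1\le i\le d+e$, either $1\le i\le d$, in which case the $X$-factor vanishes, or $d+1\le i\le d+e$, in which case $1\le i-d\le e$ and the $Y$-factor vanishes. Either way every summand in the Künneth decomposition is zero, so $\mathrm{Ext}^k(N_i,\mathcal{E}^\bullet\boxtimes\mathcal{F}^\bullet)=0$ for all $k$, which is exactly the derived Ulrich condition for $\mathcal{E}^\bullet\boxtimes\mathcal{F}^\bullet$ with respect to $(N_i)$. The main obstacle I anticipate is purely bookkeeping rather than conceptual: getting the index shift $M_{i-d}$ and the dualizations consistent so that the two vanishing ranges $\{1,\dots,d\}$ and $\{d+1,\dots,d+e\}$ tile the full interval $\{1,\dots,d+e\}$ with no overlap gaps — this is the point where the shift by $\dim X$ is forced. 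A secondary technical point is justifying the Künneth formula for unbounded-looking $R\mathcal{H}om$ of objects in $\mathrm{D}^b_{\mathrm{coh}}$ over a field (flat base change over $\mathrm{Spec}\,k$ makes the derived tensor product over $k$ exact, so the $\mathrm{Tor}$-spectral sequence degenerates and one gets the clean direct-sum decomposition above), but over a field this is standard and I would simply cite it.
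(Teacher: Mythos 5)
Your proof is correct and follows essentially the same route as the paper: the derived K\"unneth formula decomposes $\mathrm{Ext}^k(\mathcal{E}^\bullet\boxtimes\mathcal{F}^\bullet, L_i^\vee\boxtimes M_{i-d}^\vee)$ into tensor products of Ext groups on the factors, and the index shift by $\dim X$ makes the two vanishing ranges tile $\{1,\dots,d+e\}$ (a point you spell out more explicitly than the paper does). The detour through Remark \ref{switch} is unnecessary --- the paper applies K\"unneth directly to the Ext group with $\mathcal{E}^\bullet\boxtimes\mathcal{F}^\bullet$ in the first slot --- but this does not affect correctness.
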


\begin{proof} Let $d=\text{dim}X+\text{dim}Y\overset{denote}{=}d_1+d_2$. The exterior tensor product of very ample line bundles is a very ample line bundle (Segre) and by using the derived K\"{u}nneth formula \cite[3.33,(ii)]{huy} we get that, for any $j$ and any $1\leq i\leq d$, 
\begin{align*}
\text{Ext}^j(\mathcal{E}^{\bullet}\boxtimes \mathcal{F}^{\bullet}, L_i^{\vee}\boxtimes M_{i-d_1}^{\vee})\simeq \bigoplus\limits_{r+s=j}\text{Ext}^r(\mathcal{E}^{\bullet}, L_i^{\vee})\otimes\text{Ext}^s(\mathcal{F}^{\bullet}, M_{i-d_1}^{\vee})=0,
\end{align*}
hence the conclusion. 
\end{proof}

We shall need two lemmas.  They are certainly well--known,  but we couldn't find proofs for them,  so we supply them with proofs (besides,  the proof of the second lemma will be needed).

\begin{lema}\label{l1}\cite[1.37]{huy} If $A\overset{u}{\rightarrow} B\overset{v}{\rightarrow} C\overset{t}{\rightarrow} A[1]$ is a distinguished triangle in a triangulated category such that $t$ is trivial, then the triangle is split, i.e.  is given by a direct sum decomposition $B\simeq A\oplus C$. 
\end{lema}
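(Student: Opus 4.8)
The plan is to produce a section of $v$ out of the vanishing of $t$, and then to recognize the given triangle as isomorphic to the obvious split triangle via the triangulated five--lemma. First I would apply the cohomological functor $\operatorname{Hom}(C,-)$ to the distinguished triangle, obtaining the exact sequence
\begin{align*}
\operatorname{Hom}(C,A)\xrightarrow{\,u\circ-\,}\operatorname{Hom}(C,B)\xrightarrow{\,v\circ-\,}\operatorname{Hom}(C,C)\xrightarrow{\,t\circ-\,}\operatorname{Hom}(C,A[1]).
\end{align*}
Since $t$ is trivial, the rightmost map vanishes, hence $v\circ-$ is surjective and there is some $s\colon C\to B$ with $v\circ s=\operatorname{id}_C$.

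Next I would compare the given triangle with the split one. The triangle $A\xrightarrow{\binom{1}{0}}A\oplus C\xrightarrow{(0\ 1)}C\xrightarrow{0}A[1]$ is distinguished, being the direct sum of the trivial triangles $A\xrightarrow{\operatorname{id}}A\to 0\to A[1]$ and $0\to C\xrightarrow{\operatorname{id}}C\to 0$, since a finite direct sum of distinguished triangles is again distinguished (see \cite{huy}). Put $\phi=(u\ s)\colon A\oplus C\to B$. Then $\phi\circ\binom{1}{0}=u$, and using $v\circ u=0$ (consecutive morphisms in a distinguished triangle compose to zero) together with $v\circ s=\operatorname{id}_C$ one computes $v\circ\phi=(0\ 1)$; the last square commutes trivially because $t=0$ on both triangles. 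Hence $(\operatorname{id}_A,\phi,\operatorname{id}_C)$ is a morphism of distinguished triangles which is an isomorphism at the first and third vertices.

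Finally, the five--lemma for triangulated categories --- a morphism of distinguished triangles that is an isomorphism on two of the three objects is an isomorphism on the third --- forces $\phi$ to be an isomorphism, so $B\simeq A\oplus C$ in a way compatible with $u$ and $v$, which is the assertion. The whole argument is formal and I do not expect a genuine obstacle; the only things to be careful about are writing down the correct $\operatorname{Hom}$ long exact sequence, recalling that finite direct sums of distinguished triangles are distinguished, and invoking the triangulated five--lemma. One could instead bypass the five--lemma by also extracting, from the dual sequence for $\operatorname{Hom}(-,A)$ and the vanishing of $t$, a retraction $r\colon B\to A$ of $u$ and then checking by a short diagram chase that $B\cong A\oplus C$, but the route above is shorter.
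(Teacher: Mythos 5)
Your argument is correct, and it is the standard textbook proof; but it is organized differently from the one in the paper, so a comparison is worthwhile. You obtain a one-sided inverse on the $C$ side: applying $\operatorname{Hom}(C,-)$ and using $t=0$ immediately yields a section $s$ of $v$, after which you build the explicit comparison morphism $(u\ s)\colon A\oplus C\to B$ to the split triangle and close with the triangulated five--lemma. The paper works on the $A$ side instead: it completes $i_1\colon A\to A\oplus C$ to a distinguished triangle with some unidentified cone $C_0$, uses the rotation axiom and the completion axiom \cite[IV.1.1.TR2, IV.1.2.5]{gm} to produce a morphism of triangles, extracts from it a retraction $u'$ of $u$, and then concludes not by the five--lemma but by splitting the short exact sequences $0\to\operatorname{Hom}(C,D)\to\operatorname{Hom}(B,D)\to\operatorname{Hom}(A,D)\to 0$ for every test object $D$ and invoking Yoneda --- in effect re-proving the needed instance of the five--lemma by hand. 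Your route is shorter and produces an isomorphism explicitly compatible with $u$ and $v$, but it leans on two standard facts that you should make sure are available in your reference: that a finite direct sum of distinguished triangles is distinguished, and the triangulated five--lemma itself (both are routine but neither is an axiom; each is usually proved by exactly the $\operatorname{Hom}$--plus--Yoneda argument the paper spells out). The paper's route avoids the direct-sum lemma entirely by never identifying $C_0$ with $C$, at the cost of the heavier diagram gymnastics with TR2 and TR3. Note also the one place where your write-up is slightly glib: the retraction $r$ of $u$ that you mention in your alternative ending comes from the exact sequence for $\operatorname{Hom}(-,A)$ using the vanishing of the \emph{shifted} connecting map $t[-1]$, which is again zero since $t$ is; this is harmless but worth saying precisely.
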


\begin{proof} Because the category is additive, we may form $A\oplus C$. Then complete $i_1:A\rightarrow A\oplus C$ to a distinguished triangle $A\rightarrow A\oplus C\rightarrow C_0\rightarrow A[1]$ \cite[IV.1.1.TR1.(c)]{gm}. By applying \cite[IV.1.1.TR2]{gm} twice to the previous sequence and the one from hypothesis we get that $C\overset{t}{\rightarrow} A[1]\overset{-u[1]}{\rightarrow} B[1]\overset{-v[1]}{\rightarrow} C[1]$ and $C_0\rightarrow A[1]\rightarrow A[1]\oplus C[1]\rightarrow C_0[1]$ are distinguished triangles (the translation functor is additive).   Thus, as $t=0$, \cite[IV.1.2.5]{gm} gives a commutative diagram as follows:
\[
\begin{tikzcd}
C \arrow[d, dashrightarrow, "\gamma"] \arrow{r}  & A[1] \arrow[d, "id"] \arrow{r} & B[1] \arrow[d, dashrightarrow, "\beta"] \arrow{r} & C[1] \arrow[d, dashrightarrow, "\gamma{[1]}"]\\
C_0 \arrow{r} & A[1] \arrow{r} & A[1]\oplus C[1] \arrow{r} & C_0[1] \\
\end{tikzcd}
\]
Shifting backwards, we get the following commutative diagram:
\[
\begin{tikzcd}
A \arrow[d, "id"] \arrow{r}{u}  & B \arrow[d, "\beta{[-1]}"] \arrow{r}{v} & C \arrow[d, "\gamma"] \arrow{r}{t} & A[1] \arrow[d, "id"]\\
A \arrow{r}{i_1} & A\oplus C \arrow{r} & C_0 \arrow{r} & A[1] \\
\end{tikzcd}
\]
Then let $u':=p_1\circ\beta[-1]\in\text{Hom}(B, A)$, where $p_1$ is the first projection $A\oplus C\rightarrow A$. Indeed, we have $u'\circ u=p_1\circ (\beta[-1]\circ u)=p_1\circ i_1=\text{id}_A$. 

Now let $D$ be an arbitrary object in the category. We then have the long exact sequence arising from Hom$(- ,D)$ and the triangle in the hypothesis, from which we extract a short exact sequence in the category of abelian groups $\textbf{Ab}$:
\begin{align*}
0\rightarrow\text{Hom}(C, D)\rightarrow\text{Hom}(B, D)\rightarrow\text{Hom}(A, D)\rightarrow 0
\end{align*}
But the previous diagram says that this sequence splits in $\textbf{Ab}$, so the splitting lemma gives
\begin{align*}
\text{Hom}(B, D)\simeq\text{Hom}(A, D)\oplus\text{Hom}(C, D)\simeq\text{Hom}(A\oplus C, D).
\end{align*}

Therefore, since $D$ was arbitrary, Yoneda lemma gives $B\simeq A\oplus C$. 
\end{proof}

\begin{lema}\label{l2} \cite[2.33]{huy} Say $\mathcal{A}$ is an abelian category, and let $A^{\bullet}\in\text{ob(D}^+(\mathcal{A}))$. If $m=\text{min}\{i; \text{H}^i(A^{\bullet})\neq 0\}$, then there exists a distinguished triangle 
\begin{align*}
\text{H}^m(A^{\bullet})[-m]\rightarrow A^{\bullet}\overset{\varphi}{\rightarrow} B^{\bullet}\rightarrow\text{H}^m(A^{\bullet})[1-m]
\end{align*}
in D$^+(\mathcal{A})$ such that H$^i(B^{\bullet})=0$ for all $i\leq m$ and $\varphi$ inducing isomorphisms H$^i(\varphi):\text{H}^i(A^{\bullet})\simeq\text{H}^i(B^{\bullet})$ for each $i>m$.
\end{lema}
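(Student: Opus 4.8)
The statement is the standard "canonical filtration/truncation" construction for a bounded-below complex, isolating its lowest nonzero cohomology. I would prove it by exhibiting $B^\bullet$ explicitly as a truncation of $A^\bullet$ and then checking the claimed properties. Concretely, write $A^\bullet = (\cdots \to A^{m-1}\xrightarrow{d^{m-1}} A^m \xrightarrow{d^m} A^{m+1}\to\cdots)$; since $m$ is minimal with $\mathrm{H}^m(A^\bullet)\neq 0$, the complex is exact in degrees $<m$, so $\mathrm{H}^m(A^\bullet)=\mathrm{coker}(d^{m-1})=A^m/\mathrm{im}(d^{m-1})$ (using exactness at $A^{m-1}$ one could even say $\ker d^m/\operatorname{im}d^{m-1}$, but coker of the incoming differential into the naive truncation is what I want).

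\emph{Step 1: construct $B^\bullet$.} Take $B^\bullet$ to be the stupid/canonical truncation placing $A^m/\operatorname{im}(d^{m-1})$ in degree $m$ and $A^i$ in degrees $i>m$: set $B^i=0$ for $i<m$, $B^m = A^m/\operatorname{im}(d^{m-1}) = \operatorname{coker}(d^{m-1})$, $B^i = A^i$ for $i>m$, with differential induced by $d^m$ (well-defined since $d^m\circ d^{m-1}=0$) and $d^i$ for $i>m$. There is an evident degreewise-surjective chain map $\varphi: A^\bullet\to B^\bullet$ which is the identity in degrees $>m$, the quotient map in degree $m$, and zero below.

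\emph{Step 2: identify the kernel and get the triangle.} The kernel complex $K^\bullet=\ker\varphi$ is $(\cdots\to A^{m-1}\to \operatorname{im}(d^{m-1})\to 0\to\cdots)$, concentrated in degrees $\le m$. By exactness of $A^\bullet$ in degrees $<m$ and at $A^{m-1}$, the inclusion $\operatorname{im}(d^{m-1})\hookrightarrow A^m$ identifies $K^\bullet$ as a resolution of $\mathrm{H}^m(A^\bullet)$ placed in degree $m$ — more precisely the canonical quasi-isomorphism $K^\bullet \to \mathrm{H}^m(A^\bullet)[-m]$ comes from $\operatorname{coker}$ of $A^{m-1}\to\operatorname{im}(d^{m-1})$ being $0$... wait, I should be careful: $K^\bullet$ in degree $m$ is $\operatorname{im}(d^{m-1})$, and $\mathrm{H}^m(K^\bullet)=\operatorname{im}(d^{m-1})/\operatorname{im}(d^{m-1})=0$; $\mathrm{H}^i(K^\bullet)=0$ for $i<m$ by exactness of $A^\bullet$ there. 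So in fact $K^\bullet$ is acyclic, which means $\varphi$ is already a quasi-isomorphism — that's not what the statement wants. The correct choice is instead to let $B^\bullet$ be the \emph{brutal} truncation $\sigma_{>m}$ up to the cocycle: take $B^m=\operatorname{im}(d^m)\subseteq A^{m+1}$? That shifts degrees awkwardly. The clean route, and the one I would actually carry out, is: let $A^\bullet$ receive a quasi-isomorphism from its good truncation and dualize — i.e. define $B^\bullet$ via the short exact sequence of complexes $0\to \mathrm{H}^m(A^\bullet)[-m]\to A^\bullet\to B^\bullet\to 0$ where $\mathrm{H}^m(A^\bullet)[-m]\to A^\bullet$ is NOT a chain map in general; so instead use the canonical truncation $\tau_{\le m}A^\bullet$, which is quasi-isomorphic to $\mathrm{H}^m(A^\bullet)[-m]$ since $m$ is the lowest degree, and form the cone. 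Explicitly: $\tau_{\le m}A^\bullet = (\cdots\to A^{m-1}\to \ker d^m\to 0\to\cdots)$ sits inside $A^\bullet$ as a subcomplex, the quotient $A^\bullet/\tau_{\le m}A^\bullet$ is $(\cdots 0\to A^m/\ker d^m\to A^{m+1}\to\cdots)$ which I call $B^\bullet$, and the short exact sequence of complexes $0\to\tau_{\le m}A^\bullet\to A^\bullet\to B^\bullet\to 0$ yields, via \cite[III.3.5]{gm}, a distinguished triangle $\tau_{\le m}A^\bullet\to A^\bullet\to B^\bullet\to (\tau_{\le m}A^\bullet)[1]$ in $\mathrm{D}^+(\mathcal A)$. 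Finally $\tau_{\le m}A^\bullet\simeq \mathrm{H}^m(A^\bullet)[-m]$ quasi-isomorphically because $\mathrm{H}^i(\tau_{\le m}A^\bullet)=\mathrm{H}^i(A^\bullet)$ for $i\le m$, which is $0$ for $i<m$ and $\mathrm{H}^m(A^\bullet)$ for $i=m$, and a complex with cohomology in a single degree is isomorphic in the derived category to that cohomology placed in that degree. Substituting gives the desired triangle, with connecting map landing in $\mathrm{H}^m(A^\bullet)[1-m]$.

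\emph{Step 3: verify the cohomology of $B^\bullet$ and that $\varphi$ is an iso above $m$.} From the long exact cohomology sequence of the short exact sequence of complexes, $\mathrm{H}^i(B^\bullet)$ fits between $\mathrm{H}^i(A^\bullet)$ and $\mathrm{H}^{i+1}(\tau_{\le m}A^\bullet)=\mathrm{H}^{i+1}(A^\bullet)$-for-$i+1\le m$; for $i<m$ both neighbours vanish, so $\mathrm{H}^i(B^\bullet)=0$; for $i=m$, we get $\mathrm{H}^m(\tau_{\le m}A^\bullet)\to\mathrm{H}^m(A^\bullet)\to\mathrm{H}^m(B^\bullet)\to\mathrm{H}^{m+1}(\tau_{\le m}A^\bullet)=0$ and the first map is an isomorphism (it is the identity on $\mathrm{H}^m$), forcing $\mathrm{H}^m(B^\bullet)=0$; for $i>m$ the outer terms $\mathrm{H}^i(\tau_{\le m}A^\bullet)$ and $\mathrm{H}^{i+1}(\tau_{\le m}A^\bullet)$ vanish, so $\mathrm{H}^i(\varphi):\mathrm{H}^i(A^\bullet)\xrightarrow{\sim}\mathrm{H}^i(B^\bullet)$.

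\emph{Main obstacle.} The only genuinely delicate point is the quasi-isomorphism $\tau_{\le m}A^\bullet\simeq\mathrm{H}^m(A^\bullet)[-m]$ in $\mathrm D^+(\mathcal A)$: one must produce an actual chain map realizing it (the obvious candidate $\tau_{\le m}A^\bullet\twoheadrightarrow\mathrm H^m(A^\bullet)[-m]$, sending $\ker d^m$ onto $\mathrm H^m$ and the lower terms to zero, is a chain map precisely because $m$ is minimal, so the differential $A^{m-1}\to\ker d^m$ has image inside $\operatorname{im}d^{m-1}$ which dies in $\mathrm H^m$), and then check it is a quasi-isomorphism degree by degree — routine once set up, using exactness of $A^\bullet$ below degree $m$. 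Everything else is diagram-chasing with the long exact sequence and the standard translation of short exact sequences of complexes into triangles cited as \cite[III.3.5]{gm}.
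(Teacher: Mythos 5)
Your proposal is correct and is essentially the paper's argument: both realize $\mathrm{H}^m(A^{\bullet})[-m]\to A^{\bullet}$ via a canonical truncation, convert to a distinguished triangle, and read off the cohomology statements from the long exact sequence. The only (dual) difference is which object you re-model to make that first arrow an honest chain map --- the paper replaces $A^{\bullet}$ by $\tau_{\geq m}A^{\bullet}$ so that $\mathrm{H}^m(A^{\bullet})$ injects into the degree-$m$ term and takes $B^{\bullet}$ to be the cone, whereas you replace $\mathrm{H}^m(A^{\bullet})[-m]$ by the quasi-isomorphic subcomplex $\tau_{\leq m}A^{\bullet}$ and take $B^{\bullet}$ to be the explicit quotient; also note that for $i=m-1$ the right-hand neighbour $\mathrm{H}^{m}(\tau_{\leq m}A^{\bullet})$ does not vanish, so that case needs the same injectivity argument you already use at $i=m$.
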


\begin{proof} Since H$^i(A^{\bullet})=0$ for all $i<m$, the commutative diagram
\[
\begin{tikzcd}
... \arrow{r} & A^{m-1} \arrow{d} \arrow{r} & A^m \arrow{d} \arrow{r} & A^{m+1} \arrow[d, "id"] \arrow{r} & ... \\
... \arrow{r} & 0 \arrow{r} & \text{Coker}(d_{A^{\bullet}}^{m-1}) \arrow{r} & A^{m+1} \arrow{r} & ... \\
\end{tikzcd}
\]
establish an isomorphism in D$^{+}(\mathcal{A})$ \cite[III.2.1.(a)]{gm}, showing that we may suppose $A^{\bullet}$ is such that $A^i=0$ for all $i<m$. 

As such, H$^m(A^{\bullet})=\text{Ker}(d^m_{A^{\bullet}})$, so we have the canonical injection H$^m(A^{\bullet})\hookrightarrow A^m$. But Remark \ref{emb} says that, in D$^+(\mathcal{A})$, H$^m(A^{\bullet})$ is a complex concentrated in degree $0$, so that in order to get a morphism of complexes we have to translate by $[-m]$. Thus, we have a morphism in Kom$^+(\mathcal{A})$, H$^m(A^{\bullet})[-m]\rightarrow A^{\bullet}$, inducing the identity on the $m$-th cohomology, and hence one such in D$^+(\mathcal{A})$ (by the afformentioned result from \cite{gm}). By completing this morphism to a distinguished triangle and then using the associated long exact sequence in cohomology we get the conclusion. 
\end{proof}

\begin{prop}\label{dh1} Let $\mathcal{A}$ be an abelian category with dh$(\mathcal{A})=1$. Then for any $X^{\bullet}\in\text{ob(D}^b(\mathcal{A}))$ we have $X^{\bullet}\simeq\bigoplus_iA_i[i]$, for some $A_i\in\text{ob}(\mathcal{A})$. 
\end{prop}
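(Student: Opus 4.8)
The plan is to induct on the length of the complex $X^\bullet$, peeling off the bottom cohomology sheaf one step at a time and showing each extension splits. Let $m = \min\{i : \mathrm{H}^i(X^\bullet) \neq 0\}$ and $M = \max\{i : \mathrm{H}^i(X^\bullet) \neq 0\}$; I would induct on $M - m$. If $M = m$ then $X^\bullet$ is quasi-isomorphic to $\mathrm{H}^m(X^\bullet)[-m]$, which is already of the required form. For the inductive step, apply Lemma \ref{l2} to obtain a distinguished triangle
\begin{align*}
\mathrm{H}^m(X^\bullet)[-m] \to X^\bullet \overset{\varphi}{\to} B^\bullet \to \mathrm{H}^m(X^\bullet)[1-m],
\end{align*}
where $\mathrm{H}^i(B^\bullet) = 0$ for $i \leq m$ and $\mathrm{H}^i(\varphi)$ is an isomorphism for $i > m$. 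Then $B^\bullet$ has cohomology concentrated in degrees $m+1, \ldots, M$, so by the inductive hypothesis $B^\bullet \simeq \bigoplus_{i > m} A_i[i]$ with $A_i = \mathrm{H}^i(B^\bullet) = \mathrm{H}^i(X^\bullet) \in \mathrm{ob}(\mathcal{A})$.

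The crux is then to show the triangle splits, for which by Lemma \ref{l1} it suffices to prove that the connecting morphism $t : B^\bullet \to \mathrm{H}^m(X^\bullet)[1-m]$ is zero. Since $B^\bullet \simeq \bigoplus_{i>m} A_i[i]$, we have
\begin{align*}
\mathrm{Hom}_{\mathrm{D}^b(\mathcal{A})}(B^\bullet, \mathrm{H}^m(X^\bullet)[1-m]) \simeq \bigoplus_{i>m} \mathrm{Hom}_{\mathrm{D}^b(\mathcal{A})}(A_i, \mathrm{H}^m(X^\bullet)[1-m-i]) \simeq \bigoplus_{i>m}\mathrm{Ext}^{1-m-i}_{\mathcal{A}}(A_i, \mathrm{H}^m(X^\bullet)),
\end{align*}
using Proposition \ref{notforg} to pass from derived Hom to Ext in $\mathcal{A}$. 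For $i > m$ the exponent $1 - m - i$ is either negative (when $i \geq 2-m$, i.e. always once $i > m$ forces $i \geq m+1$, giving $1-m-i \leq -m \leq \ldots$) — more carefully: $1 - m - i \leq 1 - m - (m+1) = -2m$; in any case $1-m-i < 0$ unless... actually since $i \geq m+1$ we get $1-m-i \leq -2m$; when $m \geq 1$ this is negative, and for general $m$ note $1-m-i \leq -i < 0$ is false — let me instead argue directly: $1-m-i$ with $i \geq m+1$ gives $1-m-i \leq -2m$, and separately $1-m-i$ is never equal to $0$ or $1$ since that would force $i = 1-m$ or $i = -m$, both $\leq m$, contradicting $i > m$. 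Hence every exponent $1-m-i$ is either negative or $\geq 2$; negative Ext vanishes trivially, and $\mathrm{Ext}^{\geq 2}_{\mathcal{A}} = 0$ because $\mathrm{dh}(\mathcal{A}) = 1$. Therefore all summands vanish, so $t = 0$.

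Consequently Lemma \ref{l1} gives $X^\bullet \simeq \mathrm{H}^m(X^\bullet)[-m] \oplus B^\bullet \simeq \mathrm{H}^m(X^\bullet)[-m] \oplus \bigoplus_{i > m} \mathrm{H}^i(X^\bullet)[i] = \bigoplus_i \mathrm{H}^i(X^\bullet)[i]$, completing the induction. The main obstacle I anticipate is purely bookkeeping: making the degree count in the Ext-vanishing argument airtight (ensuring $1-m-i$ never lands in $\{0,1\}$ for $i > m$), and correctly invoking Lemma \ref{l2} and Lemma \ref{l1} in the bounded derived category $\mathrm{D}^b(\mathcal{A})$ rather than $\mathrm{D}^+(\mathcal{A})$ — but since $X^\bullet$ has bounded cohomology, $B^\bullet$ does too, so one stays inside $\mathrm{D}^b(\mathcal{A})$ throughout. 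The natural choice of the $A_i$ is $A_i = \mathrm{H}^i(X^\bullet)$, so the decomposition is in fact canonical up to isomorphism.
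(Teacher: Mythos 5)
Your strategy is exactly the paper's: the proof there simply defers to Huybrechts [3.15], which is precisely the induction you describe (peel off the lowest cohomology with Lemma \ref{l2}, kill the connecting morphism using $\mathrm{dh}(\mathcal{A})=1$, split with Lemma \ref{l1}). So the architecture is right. But the one step you yourself flag as delicate --- the degree count --- is wrong as written, and the patch you offer for it is false. The slip is the direction of the shift: for $A\in\mathrm{ob}(\mathcal{A})$ placed in degree $0$, the complex $A[i]$ has its cohomology in degree $-i$, so the summand of $B^{\bullet}$ carrying $\mathrm{H}^j(B^{\bullet})$ is $\mathrm{H}^j(B^{\bullet})[-j]$, not $\mathrm{H}^j(B^{\bullet})[j]$; equivalently, in a decomposition $B^{\bullet}\simeq\bigoplus_i A_i[i]$ one has $A_i=\mathrm{H}^{-i}(B^{\bullet})$, nonzero only for $-M\le i\le -(m+1)$. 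With your labeling the exponent $1-m-i$ can perfectly well land in $\{0,1\}$: take $m=0$ and $i=1$ (a complex with cohomology in degrees $0$ and $1$, the most basic nontrivial case), where $1-m-i=0$ and $\mathrm{Ext}^0=\mathrm{Hom}$ has no reason to vanish. Your claim that $i=1-m$ would contradict $i>m$ fails whenever $m\le 0$.

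The repair is one line. With the correct shifts,
\begin{align*}
\mathrm{Hom}\bigl(B^{\bullet},\mathrm{H}^m(X^{\bullet})[1-m]\bigr)\simeq\bigoplus_{j=m+1}^{M}\mathrm{Ext}^{\,1-m+j}_{\mathcal{A}}\bigl(\mathrm{H}^j(B^{\bullet}),\mathrm{H}^m(X^{\bullet})\bigr),
\end{align*}
and $1-m+j\ge 2$ for every $j\ge m+1$, so every summand vanishes by $\mathrm{dh}(\mathcal{A})=1$ --- no case analysis about ``negative or $\ge 2$'' is needed. With that correction (and the matching fix in your final display, which should read $X^{\bullet}\simeq\bigoplus_i\mathrm{H}^i(X^{\bullet})[-i]$), your argument is complete and coincides with the proof the paper intends.
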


\begin{proof} All the necessary tools are available in order to proceed inductively on the length of $X^{\bullet}$: with Lemma \ref{l1} and Lemma \ref{l2} established, the proof goes verbatim with the proof of \cite[3.15]{huy}. 
\end{proof}

\begin{coro}\label{curves} Let $\mathcal{A}$ be an abelian category with $\text{dh}(\mathcal{A})=1$ and $(L_i)_i$ a sequence in ob$(\mathcal{A})$. If $X^{\bullet}\in\text{D}^b(\mathcal{A})$ is an Ulrich object with respect to $(L_i)$, then there is an Ulrich object in $\mathcal{A}$. 
\end{coro}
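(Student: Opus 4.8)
The plan is to exploit Proposition \ref{dh1}, which tells us that when $\text{dh}(\mathcal{A})=1$ every object $X^{\bullet}$ of $\text{D}^b(\mathcal{A})$ decomposes as a finite direct sum $\bigoplus_i A_i[i]$ with $A_i\in\text{ob}(\mathcal{A})$. First I would apply this to the given Ulrich object $X^{\bullet}$ to obtain such a decomposition. Then, for any $j$ in the relevant range $-\text{dh}_t(\mathcal{D})\leq j\leq -1$ (which here is just $j=-1$) and any integer $\ell$, the Hom-groups split: $\text{Hom}(X^{\bullet}, L_j[\ell])\simeq\bigoplus_i\text{Hom}(A_i[i], L_j[\ell])\simeq\bigoplus_i\text{Hom}(A_i, L_j[\ell-i])\simeq\bigoplus_i\text{Ext}^{\ell-i}(A_i, L_j)$, using that the shift functor is additive and the basic identity $\text{Hom}_{\text{D}^b(\mathcal{A})}(A[i], B[\ell])=\text{Hom}_{\text{D}^b(\mathcal{A})}(A, B[\ell-i])$. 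Since $X^{\bullet}$ is Ulrich, the left-hand side vanishes for every $\ell$, so each summand $\text{Ext}^{\ell-i}(A_i, L_j)$ vanishes for every $\ell$ and $i$; letting $\ell$ range over all integers, this says $\text{Ext}^m(A_i, L_j)=0$ for all $m$ and all $i$.

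Now each $A_i$ lies in $\mathcal{A}$, hence (via the embedding of Remark \ref{emb} and Proposition \ref{notforg}) may be regarded as an object of $\text{D}^b(\mathcal{A})$ whose Hom-groups into shifts of the $L_j$ compute the Ext-groups in $\mathcal{A}$; the vanishing just established is exactly the condition that each $A_i$ is an Ulrich object in $\mathcal{A}$ with respect to $(L_i)_i$. The only point needing care is that the statement asks for \emph{an} Ulrich object in $\mathcal{A}$, i.e.\ a genuinely nonzero one — if $X^{\bullet}\neq 0$ then at least one $A_i$ is nonzero, and that $A_i$ is the desired Ulrich object; if $X^{\bullet}=0$ the statement is vacuous (or one takes the zero object). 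So the argument is essentially a formal unwinding.

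The only mild subtlety — and the step I would flag as the ``main obstacle'' even though it is minor — is making sure the $t$-structure on the core $\mathcal{A}$ is the standard one so that $\text{dh}_t(\text{D}^b(\mathcal{A}))=\text{dh}(\mathcal{A})=1$ and Proposition \ref{dh1} applies as stated; this is automatic when $\mathcal{D}=\text{D}^b(\mathcal{A})$ with its canonical $t$-structure, which is the situation of Corollary \ref{curves}. Granting that, one simply records the decomposition and the splitting of Hom, and concludes. I would write the proof in three short moves: (i) decompose $X^{\bullet}\simeq\bigoplus_i A_i[i]$ by Proposition \ref{dh1}; (ii) observe $\text{Ext}^{\bullet}_{\mathcal{A}}(A_i, L_j)$ is a direct summand of $\text{Hom}_{\text{D}^b(\mathcal{A})}(X^{\bullet}, L_j[\bullet])=0$, hence vanishes for all $i$ and all $j$ in range; (iii) pick any nonzero $A_i$ (existing when $X^{\bullet}\neq 0$) and note it is an Ulrich object of $\mathcal{A}$.
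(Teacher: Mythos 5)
Your proposal is correct and follows essentially the same route as the paper's proof: decompose $X^{\bullet}\simeq\bigoplus_i A_i[i]$ via Proposition \ref{dh1}, use that Hom into shifts of $L_j$ splits over the finite direct sum, and conclude that each (nonzero) $A_i$ is an Ulrich object of $\mathcal{A}$. Your extra remarks on nonvacuousness and on the canonical $t$-structure are harmless refinements of the same argument.
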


\begin{proof} According to Proposition \ref{dh1}, we have $X^{\bullet}\simeq \bigoplus_i A_i[i]$, for some $A_i\in\text{ob}(\mathcal{A})$. As we are in D$^b(\mathcal{A})$, this direct sum is finite; moreover, the Ext's commutes with finite direct sums in the first variable. Then we get that, for any $j$, $0=\text{Ext}_{D^b(\mathcal{A})}^j(X^{\bullet}, L^{\vee})\simeq\bigoplus_i\text{Ext}^{j-i}(A_i, L_1^{\vee})$. Thus, Ext$^j(A_i, L_1^{\vee})=0$ for any $j$, so each $A_i$ is an Ulrich object in $\mathcal{A}$. 
\end{proof}

Moving on: 

\begin{prop}\label{dh2} Let $\mathcal{A}$ be an abelian category with $\text{dh}(\mathcal{A})=2$ and $(L_i)_i$ be a sequence in ob$(\mathcal{A})$. If $A^{\bullet}\in\text{D}^b(\mathcal{A})$ is an Ulrich object with respect to $(L_i)$, then there is an Ulrich object in $\mathcal{A}$ relatively to the same sequence.
\end{prop}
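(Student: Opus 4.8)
The plan is to induct on the cohomological amplitude $n(A^\bullet):=\max\{i:H^i(A^\bullet)\neq 0\}-\min\{i:H^i(A^\bullet)\neq 0\}+1$, using Lemma \ref{l2} to strip off the bottom cohomology and Lemma \ref{l1} to dispose of the split situations. If $n(A^\bullet)=1$ we are done at once: $A^\bullet\simeq H^m(A^\bullet)[-m]$ for $m$ the unique degree carrying cohomology, so $\mathrm{Hom}_{\mathrm D^b(\mathcal A)}(A^\bullet,L_i[j])=\mathrm{Ext}^{\,j-m}_{\mathcal A}(H^m(A^\bullet),L_i)$, and the Ulrich vanishings for $A^\bullet$ say precisely that $H^m(A^\bullet)\in\mathcal A$ is an Ulrich object relative to $(L_i)_i$.

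For the inductive step set $m=\min\{i:H^i\neq 0\}$ and use (the construction in the proof of) Lemma \ref{l2} to get a distinguished triangle $H^m(A^\bullet)[-m]\to A^\bullet\to B^\bullet\to H^m(A^\bullet)[1-m]$ with $B^\bullet$ supported in degrees $>m$, hence of strictly smaller amplitude, and $H^i(B^\bullet)\cong H^i(A^\bullet)$ for $i>m$. Applying $\mathrm{Hom}(-,L_i[j])$ and feeding in the hypothesis $\mathrm{Hom}(A^\bullet,L_i[j])=0$ for all $j$ gives $\mathrm{Hom}_{\mathrm D^b(\mathcal A)}(B^\bullet,L_i[j])\cong\mathrm{Ext}^{\,j+m-1}_{\mathcal A}(H^m(A^\bullet),L_i)$; since $\mathrm{dh}(\mathcal A)=2$ the right side vanishes unless $j\in\{1-m,2-m,3-m\}$, while the hypercohomology spectral sequence $E_2^{p,q}=\mathrm{Ext}^p_{\mathcal A}(H^{-q}(B^\bullet),L_i)\Rightarrow\mathrm{Hom}(B^\bullet,L_i[p+q])$ --- concentrated in the band $0\le p\le 2$, with $-q$ confined to the cohomological range of $B^\bullet$ --- forces $\mathrm{Hom}(B^\bullet,L_i[j])=0$ for $j>1-m$. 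Hence this group is nonzero at most at $j=1-m$, where it reads $\mathrm{Hom}_{\mathcal A}(H^m(A^\bullet),L_i)\cong\mathrm{Ext}^2_{\mathcal A}(H^{m+1}(A^\bullet),L_i)$. Thus: if this common group vanishes for $i=-1,-2$, then $B^\bullet$ is itself an Ulrich object of smaller amplitude and the induction closes; in particular, repeating this we are reduced to $n(A^\bullet)=2$, i.e.\ (after a shift) a triangle $A\to A^\bullet\to B[-1]\xrightarrow{\delta}A[1]$ with $A,B\in\mathcal A$ and $\delta\in\mathrm{Ext}^2_{\mathcal A}(B,A)$, where the Ulrich condition becomes: $\mathrm{Ext}^1_{\mathcal A}(A,L_i)=\mathrm{Ext}^2_{\mathcal A}(A,L_i)=\mathrm{Hom}_{\mathcal A}(B,L_i)=\mathrm{Ext}^1_{\mathcal A}(B,L_i)=0$ and $\delta$ induces an isomorphism $\mathrm{Hom}_{\mathcal A}(A,L_i)\xrightarrow{\sim}\mathrm{Ext}^2_{\mathcal A}(B,L_i)$, for $i=-1,-2$.

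In this last case, if $\delta=0$ then Lemma \ref{l1} splits the triangle, $A^\bullet\simeq A\oplus B[-1]$, and the vanishing of $\mathrm{Hom}(A^\bullet,L_i[j])$ for all $j$ forces $\mathrm{Ext}^{\bullet}_{\mathcal A}(A,L_i)=\mathrm{Ext}^{\bullet}_{\mathcal A}(B,L_i)=0$, so $A$ (and $B$) is Ulrich. If $\delta\neq 0$, I would realize $\delta$ by a Yoneda $2$-extension $0\to A\to E_1\to E_0\to B\to 0$, put $Z:=\mathrm{im}(E_1\to E_0)$, and chase $\mathrm{Hom}(-,L_i)$ through $0\to A\to E_1\to Z\to 0$ and $0\to Z\to E_0\to B\to 0$: since $\delta$-multiplication $\mathrm{Hom}(A,L_i)\to\mathrm{Ext}^2(B,L_i)$ factors as the composite of the two connecting maps, a judicious choice of the $2$-extension --- taking $E_0$ (resp.\ $E_1$) to be a $\mathrm{Hom}(-,L_i)$-acyclic object such as the first term of a projective resolution of $B$ (resp.\ the last term of an injective coresolution of $A$, both available because $\mathrm{dh}(\mathcal A)=2$) --- turns the connecting maps into isomorphisms and annihilates the surviving $\mathrm{Ext}$-groups of $Z$ against each $L_i$, producing the required Ulrich object of $\mathcal A$. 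I expect this last step to be the main obstacle: any single naive representative of $\delta$ kills $\mathrm{Ext}^1$ and $\mathrm{Ext}^2$ against $L_i$ but leaves a copy of $\mathrm{Hom}(E_0,L_i)$, so the point is to balance a projective-type resolution of $B$ against an injective-type coresolution of $A$ --- and if $\mathcal A$ has neither enough projectives nor enough injectives, to argue instead directly in $\mathrm D^b(\mathcal A)$ that the Ulrich hypothesis already forces $\delta=0$, reverting to the split case.
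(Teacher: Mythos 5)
Your opening analysis is sound and in fact parallels the paper's own computation: the paper runs the spectral sequence $E_2^{p,q}=\text{Ext}^p(\text{H}^{-q}(A^{\bullet}),L_i)\Rightarrow\text{Ext}^{p+q}(A^{\bullet},L_i)$ on $A^{\bullet}$ itself and extracts exactly what you extract from your long exact sequences, namely $\text{Ext}^1(\text{H}^{q}(A^{\bullet}),L_i)=0$ for all $q$ together with a $d_2$-induced isomorphism $\text{Hom}(\text{H}^{q}(A^{\bullet}),L_i)\simeq\text{Ext}^2(\text{H}^{q\pm 1}(A^{\bullet}),L_i)$ between neighbouring cohomologies. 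The gap is precisely where you flag one, and it is genuine on two counts. First, the reduction to amplitude $2$ is not licensed: your inductive step closes only \emph{if} the leftover group $\text{Hom}(\text{H}^m(A^{\bullet}),L_i)\simeq\text{Ext}^2(\text{H}^{m+1}(A^{\bullet}),L_i)$ happens to vanish; when it does not, the $B^{\bullet}$ you produce is not Ulrich and you cannot iterate, so ``repeating this we are reduced to $n=2$'' does not follow. Second, the Yoneda manoeuvre cannot work as stated: replacing the representative $0\to A\to E_1\to E_0\to B\to 0$ by one with $E_0$ projective or $E_1$ injective changes the middle terms but not the class $\delta$, and the candidate object ($Z$, $E_0$ or $E_1$) carries $\text{Hom}(E_0,L_i)$-type groups with no reason to vanish, as you yourself observe; nor does the Ulrich hypothesis force $\delta=0$ --- it forces $\delta$-multiplication $\text{Hom}(A,L_i)\to\text{Ext}^2(B,L_i)$ to be an isomorphism, which is perfectly compatible with $\delta\neq 0$.

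The idea you are missing is the paper's final paragraph, and it is elementary. Work at the \emph{top} cohomology $\mathcal{H}=\text{H}^M(A^{\bullet})$, $M=\max\{q:\text{H}^q(A^{\bullet})\neq 0\}$. The spectral sequence already gives $\text{Ext}^1(\mathcal{H},L_i)=\text{Ext}^2(\mathcal{H},L_i)=0$, because the relevant $d_2$-isomorphism links the top slot to $\text{Ext}^{\ast}(\text{H}^{M+1}(A^{\bullet}),L_i)=0$. The one surviving group, $\text{Hom}(\mathcal{H},L_i)$, is then killed directly: there is a canonical morphism $A^{\bullet}\to\mathcal{H}[-M]$ inducing the identity on $\text{H}^M$ (the mirror of the construction in the proof of Lemma \ref{l2}), so a nonzero map $\mathcal{H}\to L_i$ would compose to a morphism $A^{\bullet}\to L_i[-M]$ which is nonzero because it induces $(\text{nonzero})\circ\text{id}$ on cohomology --- contradicting $\text{Hom}(A^{\bullet},L_i[-M])=0$. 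This is exactly the step your bottom-stripping cannot reach: the truncation attached to the \emph{bottom} cohomology is a map $\text{H}^m(A^{\bullet})[-m]\to A^{\bullet}$, which points the wrong way for the contravariant functor $\text{Hom}(-,L_i)$, whereas the top truncation points the right way and closes the argument without any case analysis on $\delta$.
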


\begin{proof} In dimension one, the process of direct sum was enough (Proposition \ref{dh1}); in general, we have a Grothendieck spectral sequence as follows:
\begin{align*}
\text{E}_2^{p,q}=\text{Ext}^p(\text{H}^{-q}(A^{\bullet}), L_i)\Rightarrow\text{Ext}^{p+q}(A^{\bullet}, L_i)
\end{align*}
\cite[2.70]{huy}. So the second page consists of just three columns, the lines $x=0, x=1, x=2$, since $\text{dh}(\mathcal{A})=2$. Because around the terms E$_2^{1,q}$ the differentials comes in from a zero term and comes out into a zero term, the line $x=1$ stabilizes, hence E$_2^{1,q}=$E$_{\infty}^{1,q}=0$, for any $q$. We now look at the third page. Here, $d_3^{0, q}:\text{E}_3^{0, q}\rightarrow\text{E}_3^{3, q-2}=\text{H(E}_2^{5, q-3})=0$ and $d_3^{-3, q+2}:\text{E}_3^{-3, q+2}=0\rightarrow\text{E}_3^{0, q}$, and this remains constant in position $(0, q)$, so E$_3^{0, q}=\text{E}_{\infty}^{0, q}=0$ for all $q$. But E$_3^{0, q}=\text{H(E}_2^{2, q-1})$, hence Ker$(d_2^{2, q-1})=\text{Im}(d_2^{0, q})$; as $d_2^{2, q-1}:\text{E}_2^{2, q-1}\rightarrow\text{E}_2^{4, q-2}=0$, we conclude that $d_2^{0, q}:\text{E}_2^{0, q}\rightarrow\text{E}_2^{2, q-1}$ is surjective. Next, we have $d_2^{-2, q+1}:\text{E}_2^{-2, q+1}=0\rightarrow\text{E}_2^{0, q}$ and $\text{E}_3^{-2, q+1}=\text{H(E}_2^{0, q})=\text{Ker}(d_2^{0, q})/\text{Im}(d_2^{-2, q+1})=\text{Ker}(d_2^{0, q})$. As before, we find E$_3^{-2, q+1}=\text{E}_{\infty}^{-2, q+1}=0$ for all $q$. Therefore, the spectral sequence gives 
\begin{align*}
\text{Ext}^1(\text{H}^{-q}(A^{\bullet}), L_i)=0\text{ and  Ext}^0(\text{H}^{-q}(A^{\bullet}), L_i)\overset{d_2}{\simeq}\text{Ext}^2(\text{H}^{-q-1}(A^{\bullet}), L_i), (*)
\end{align*}
for any $i=-1, -2$ and any $q$.

Next, since $A^{\bullet}$ comes from a bounded complex and it is not trivial, it makes sense to consider $M=\text{max}\{q; \text{H}^q(A^{\bullet})\not\simeq 0\}$, and let $\mathcal{H}=\text{H}^M(A^{\bullet})$. Thus, taking $q=-M$ and, respectively, $q=-M-1$ in $(*)$ gives $\text{Ext}^1({\mathcal{H}}, L_i)=0=\text{Ext}^2({\mathcal{H}}, L_i)$. Again, by the definition of $M$, there is clearly a morphism $A^{\bullet}\rightarrow\mathcal{H}[-M]$, inducing the identity on the $M$-th cohomology (according to the proof of Lemma \ref{l2}), and hence a morphism $A^{\bullet}[M]\rightarrow\mathcal{H}$ inducing the identity on the $0$-th cohomology. So, if $\text{Hom}(\mathcal{H}, L_i)\neq 0$ is to be the case, then by composing such a non-zero morphism with the previous one we get a non-zero morphism $A^{\bullet}[M]\rightarrow L_i$, and hence the contradiction $\text{Ext}^{-M}(A^{\bullet}, L_i)\simeq\text{Hom}(A^{\bullet}, L_i[-M])\neq 0$. Indeed, the composition can not be trivial because otherwise it will follow that, in particular, it induces the zero morphism on the $0$-th cohomology, which is absurd because this induced morphism has the form $(non-zero)\circ\text{id}$ (also by Remark \ref{emb} !).

Therefore, $\text{Ext}^j(\mathcal{H}, L_i)=0$ for all $j$ and each $i=-1, -2$ (or $i=1, 2$), i.e.  $\mathcal{H}$ is an Ulrich object in $\mathcal{A}$ with respect to $(L_i)_i$. 
\end{proof}

We now observe a way of manufacturing Ulrich bundles on surfaces:

\begin{coro}\label{manuf}Let $(X, \mathcal{O}_X(1))$ be a nonsingular projective variety over $k$ of dimension $d$, and suppose $\mathcal{E}^{\bullet}$ is a non-trivial derived Ulrich sheaf on $X$ relative to $\mathcal{O}_X(1)$.  Let $\mathcal{H}=\text{H}^M(\mathcal{E}^{\bullet})$, where $M=\text{max}\{i; \text{H}^i(\mathcal{E}^{\bullet})\neq 0\}$ If $d=3$ and $Z$ is a smooth hyperplane section of $X$, then $\mathcal{H}_{|Z}$ is Ulrich.  If $d=2$, then $\mathcal{H}^{\vee}$ is an Ulrich bundle on $X$.
\end{coro}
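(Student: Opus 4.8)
The plan is to extract an Ulrich object from the cohomology sheaves of the derived Ulrich complex $\mathcal{E}^\bullet$, using the spectral sequence machinery already built in Propositions \ref{dh1}--\ref{dh2}, and then to upgrade it from an Ulrich object in an abelian category to a genuine Ulrich bundle in the two low-dimensional cases. The case $d=2$ is immediate: since $\text{dh}(\textbf{Coh}(X))=\dim X=2$, Proposition \ref{dh2} (applied with $\mathcal{A}=\textbf{Coh}(X)$ and $L_i=\mathcal{O}_X(i)$) shows that $\mathcal{H}=\text{H}^M(\mathcal{E}^\bullet)$ is an Ulrich object in $\textbf{Coh}(X)$, i.e. $\text{Ext}^j(\mathcal{H},\mathcal{O}_X(-i))=0$ for all $j$ and $i=1,2$. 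By Remark \ref{switch} this says $(\mathcal{H})^\vee$ — more precisely the object $\mathcal{H}^\vee$ in $\text{D}_{\text{coh}}^b(X)$, which we must first check is a genuine sheaf — satisfies $\text{Ext}^j(\mathcal{O}_X(-i),\mathcal{H}^\vee)=0$, equivalently $H^j(X,\mathcal{H}^\vee(i))=0$ for $i=1,2$ and all $j$. One then invokes the Beauville-type characterization (\cite[Theorem 1.1]{beu}, as quoted in the excerpt) to conclude $\mathcal{H}^\vee$ is an Ulrich bundle; the fact that an Ulrich sheaf on a smooth variety is automatically locally free (ACM plus the cohomological vanishing forces reflexivity, and reflexive on a surface with no intermediate cohomology is locally free) takes care of the "bundle" claim.

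For $d=3$ the idea is to reduce to the surface case by restricting to a smooth hyperplane section $Z\subset X$. First I would show that $\iota^*\mathcal{E}^\bullet$ is a derived Ulrich sheaf on $Z$ with respect to $(\iota^*\mathcal{O}_X(i))_i$ — this is exactly Proposition \ref{divisor}. Then $Z$ is a smooth projective surface, so the $d=2$ analysis applies to $\iota^*\mathcal{E}^\bullet$: its top cohomology sheaf $\text{H}^{M'}(\iota^*\mathcal{E}^\bullet)$ is an Ulrich object on $Z$. The remaining point is to identify this top cohomology sheaf with $\mathcal{H}_{|Z}=\iota^*\mathcal{H}$ and to check that $M'=M$; here one uses that $\iota$ is a regular embedding of codimension one so that $L\iota^*$ has cohomological amplitude $[-1,0]$, together with the hyperplane section exact sequence $0\to\mathcal{O}_X(-1)\to\mathcal{O}_X\to\iota_*\mathcal{O}_Z\to 0$, to compare $\text{H}^M(\iota^*\mathcal{E}^\bullet)$ with $\iota^*\text{H}^M(\mathcal{E}^\bullet)$. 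Concretely, the local-to-global spectral sequence for $L\iota^*$ degenerates enough in top degree that $\text{H}^M(L\iota^*\mathcal{E}^\bullet)\simeq\iota^*\text{H}^M(\mathcal{E}^\bullet)=\mathcal{H}_{|Z}$, whence $\mathcal{H}_{|Z}$ is an Ulrich object on $Z$; and since $Z$ is smooth of dimension $2$ the same reflexivity argument as above promotes it to an Ulrich bundle. (The statement only claims "$\mathcal{H}_{|Z}$ is Ulrich", so for $d=3$ one may even stop at the Ulrich-object level once the vanishing $H^j(Z,\mathcal{H}_{|Z}(i))=0$, $i=1,2$, is in hand.)

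The main obstacle is the comparison step in the $d=3$ case: making sure that the naive restriction $\mathcal{H}_{|Z}$ really is the top cohomology of the derived restriction $L\iota^*\mathcal{E}^\bullet$, and not merely related to it by a spectral sequence with possibly nontrivial higher $\mathcal{T}or$ contributions. The cleanest way I would handle this is to argue degree by degree in the hypercohomology spectral sequence $\mathcal{T}or_{-q}^{\mathcal{O}_X}(\mathcal{O}_Z,\text{H}^p(\mathcal{E}^\bullet))\Rightarrow\text{H}^{p+q}(L\iota^*\mathcal{E}^\bullet)$: in the maximal total degree $p+q=M$ only the term $q=0$, $p=M$ survives (all higher cohomology sheaves vanish above $M$, and $\mathcal{T}or$ lives in non-positive degree), giving the clean identification. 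A secondary subtlety, shared by both cases, is the passage from "Ulrich object in $\textbf{Coh}$" to "Ulrich \emph{bundle}": this is where one genuinely needs smoothness and the low dimension, and I would cite the standard fact that an ACM sheaf with the Ulrich cohomology vanishing on a smooth variety is locally free (or deduce it from the linear resolution, as in the excerpt's discussion of $\mathcal{E}^\vee\simeq\mathcal{L}^\bullet$), together with \cite[Theorem 1]{mn} or \cite[Theorem 1.1]{beu} to certify the Ulrich property in the classical sense.
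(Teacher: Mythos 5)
Your proposal follows essentially the same route as the paper: the base-change spectral sequence $\text{E}_2^{p,q}=\text{L}^p\iota^*(\text{H}^q(\mathcal{E}^{\bullet}))\Rightarrow\text{L}^{p+q}\iota^*(\mathcal{E}^{\bullet})$, degenerate because $\iota$ is a divisor inclusion, identifies $\text{H}^M(\iota^*\mathcal{E}^{\bullet})$ with $\mathcal{H}_{|Z}$ in top degree, and the conclusion is then drawn from Proposition \ref{divisor}, Proposition \ref{dh2} and Remark \ref{switch} exactly as in the paper. Your additional attention to why the resulting Ulrich object is actually a locally free sheaf (reflexivity on a smooth surface) addresses a point the paper's proof leaves implicit, but does not change the argument.
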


\begin{proof} There is a spectral sequence
\begin{align*}
\text{E}^{p, q}_2=\text{L}^p\iota^*(\text{H}^q(\mathcal{E}^{\bullet}))\Rightarrow\text{L}^{p+q}\iota^*(\mathcal{E}^{\bullet})
\end{align*}  
\cite[page 81, 3.10)]{huy}. As usual, since $\iota$ is the inclusion of a divisor, we have degeneration at the second page which gives, for each $i\in\Z$, short exact sequences as follows:
\begin{align*}
0\rightarrow\text{L}^0\iota^*\text{H}^i(\mathcal{E}^{\bullet})\rightarrow\text{H}^i(\iota^*\mathcal{E}^{\bullet})\rightarrow\text{L}^1\iota^*\text{H}^{i+1}(\mathcal{E}^{\bullet})\rightarrow 0
\end{align*}
Thus, taking $i=M$ gives $\text{L}^0\iota^*\text{H}^M(\mathcal{E}^{\bullet})\simeq\text{H}^M(\iota^*\mathcal{E}^{\bullet})$. But $L^0F$ is $F$ on $\textbf{Coh}(X)$, so the left part of this isomorphism is just the usual restriction $\iota^*(\text{H}^M(\mathcal{E}^{\bullet}))$. Thus, the second assertion (and also the third one) follows from Proposition \ref{divisor} and Proposition \ref{dh2} (also by Remark \ref{switch}).
\end{proof}

\section{Acknowledgments} I wish to present my thanks to professor Victor Vuletescu for reading the first version and improve on it, and to professor Marian Aprodu for introducing to me the notion of Ulrich bundle.

\end{document}